\documentclass[12pt, article]{amsart}

\usepackage{amsmath,amssymb,amsthm}
\usepackage{mathrsfs}
\usepackage{geometry}
\usepackage{graphicx}
\usepackage{hyperref}
\usepackage{enumitem}

\newtheorem{theorem}{Theorem}[section]
\newtheorem{proposition}[theorem]{Proposition}
\newtheorem{corollary}[theorem]{Corollary}

\theoremstyle{definition}

\newtheorem{remark}[theorem]{Remark}
\begin{document}
\title[ ]{A Comparison of some Weighted Numerical Radii of Hilbert Space Operators}

 \author[M. Khosravi]{Maryam Khosravi}
\address{Department of Pure Mathematics, Faculty of Mathematics and Computer, Shahid Bahonar University of Kerman, Kerman, Iran} \email{khosravi$_-$m@uk.ac.ir; khosravi$_-$m2000@yahoo.com}

\author[A.
Sheikhhosseini ]{Alemeh
Sheikhhosseini}
\address{Department of Pure Mathematics, Faculty of Mathematics and Computer, Shahid Bahonar University of Kerman, Kerman, Iran} \email{sheikhhosseini@uk.ac.ir; hosseini8560@gmail.com}

\date{}

\subjclass[2010]{15A39, 15B48, 47A30, 47A63.}

 \keywords{numerical radius, weighted numerical radius, operator inequality}

\maketitle

\begin{abstract}
In recent years, several generalizations of the numerical radius for bounded linear operators on  Hilbert spaces have been introduced and extensively studied. These generalizations provide refined tools for investigating operator inequalities and spectral properties. In this paper, we investigate several generalized numerical radii and establish relationships among them.

We also establish several relationships among these generalized numerical radii and show that the $(s,t)$-weighted numerical radius can be represented as a rescaled form of the $t$-weighted numerical radius.
\end{abstract}

\section{Introduction and preliminaries}
Let $ \mathcal B(H) $ denote the C$^{*}$-algebra of all bounded linear operators on a Hilbert space $ \mathcal{H}. $ For $A\in\mathcal B(H)$, the  numerical range is  defined as follows.
$$ W(A)=\lbrace \langle Ax, x \rangle: x \in \mathcal{H},   \Vert x \Vert=1   \rbrace.  $$
The following standard properties of the numerical range are well known (see, for example, \cite{gust}):\\

(i) $ W(\alpha I + \beta A)= \alpha +\beta W(A)  $ for  $ \alpha, \beta \in \mathbb{C}; $\\

(ii)  $W(A^{*})= \lbrace  \overline{\lambda}: \lambda \in W(A) \rbrace;$\\

(iii) $ W(U^{*}AU)=W(A) $ for any unitary operator $ U. $\\

The classical numerical radius  of an operator  $ A $, denoted by $ \omega(A) $, is defined as
\[
\omega(A)=\sup\{ |\langle Ax,x\rangle| : x\in H,\ \|x\|=1 \}.
\]
It is well known that $\omega(\cdot)$ defines a norm on $\mathcal B(H)$ and satisfies the inequality
\begin{equation}\label{eq1}
\tfrac12\|A\|\le \omega(A)\le \|A\|.
\end{equation}
Moreover, if $A^{2}=0, $  then $ \omega (A)=\frac{1}{2} \Vert A \Vert$ and if $ A $ is normal, then
$ \omega(A)=\Vert A \Vert. $\\

The numerical radius  plays  a fundamental role in operator theory and is closely related  to the operator norm and spectral properties.
 Numerous refinements and generalizations of the numerical radius inequalities have been studied in the literature (see, for example, \cite{q-num,Sattari, sheyb,zam_2021}).

\section{The $ t$-weighted numerical radius }

In 2007, Yamazaki \cite{yam} established the following identity for the numerical radius in terms of the operator norm.
\begin{align}\label{yam}\omega(A)=\sup_{\theta}\|\Re(e^{i\theta}A)\|.\end{align}
Later, the following equivalent presentation was derived in \cite{kit}.
\begin{theorem} For $A\in\mathbb{B}(\mathcal H)$,
\begin{align}\label{kit}\omega(A)=\sup_{\alpha^2+\beta^{2}=1}\|\alpha \Re A+\beta\Im A\|, \end{align}
where the supremum is taken over all such real numbers.
\end{theorem}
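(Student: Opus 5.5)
The plan is to derive \eqref{kit} directly from Yamazaki's identity \eqref{yam}, by expanding $\Re(e^{i\theta}A)$ in terms of $\Re A$ and $\Im A$. We use the Cartesian decomposition $A=\Re A+i\Im A$, where $\Re A=\frac{A+A^*}{2}$ and $\Im A=\frac{A-A^*}{2i}$ are self-adjoint.

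For real $\theta$ we have
\[
e^{i\theta}A=(\cos\theta+i\sin\theta)(\Re A+i\Im A)=(\cos\theta\,\Re A-\sin\theta\,\Im A)+i(\sin\theta\,\Re A+\cos\theta\,\Im A).
\]
Both bracketed operators are self-adjoint, being real combinations of self-adjoint operators. By uniqueness of the Cartesian decomposition,
\[
\Re(e^{i\theta}A)=\cos\theta\,\Re A-\sin\theta\,\Im A.
\]

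Now set $\alpha=\cos\theta$ and $\beta=-\sin\theta$. As $\theta$ ranges over $[0,2\pi)$, the pair $(\alpha,\beta)$ ranges over exactly the unit circle $\{(\alpha,\beta)\in\mathbb R^2:\alpha^2+\beta^2=1\}$. Hence
\[
\{\|\Re(e^{i\theta}A)\|:\theta\in\mathbb R\}=\{\|\alpha\Re A+\beta\Im A\|:\alpha^2+\beta^2=1\}.
\]
Equal sets have equal suprema, so \eqref{yam} gives \eqref{kit}.

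There is no real obstacle here. The only point needing care is that the parametrization of the unit circle is onto, so that the two suprema run over the same set and not merely comparable ones.

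If one prefers not to rely on \eqref{yam}, a direct argument is also available. For unit $x$, write $\langle Ax,x\rangle=a+ib$ with $a=\langle \Re A\,x,x\rangle$ and $b=\langle \Im A\,x,x\rangle$ real. Then
\[
|\langle Ax,x\rangle|=\sqrt{a^2+b^2}=\sup_{\alpha^2+\beta^2=1}(\alpha a+\beta b)=\sup_{\alpha^2+\beta^2=1}\langle(\alpha\Re A+\beta\Im A)x,x\rangle.
\]
Take the supremum over $x$ and interchange the two suprema. Then use that, for the self-adjoint operator $T=\alpha\Re A+\beta\Im A$, we have $\sup_{\|x\|=1}\langle Tx,x\rangle\le\|T\|$ with equality after replacing $(\alpha,\beta)$ by $(-\alpha,-\beta)$ if needed, since $\|T\|=\omega(T)$. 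This also yields \eqref{kit}.
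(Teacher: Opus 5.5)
Your proposal is correct. The identity $\Re(e^{i\theta}A)=\cos\theta\,\Re A-\sin\theta\,\Im A$, together with the fact that $\theta\mapsto(\cos\theta,-\sin\theta)$ maps onto the unit circle, turns \eqref{yam} into \eqref{kit} exactly, and your direct argument (using $|a+ib|=\sup_{\alpha^2+\beta^2=1}(\alpha a+\beta b)$, the symmetry $(\alpha,\beta)\mapsto(-\alpha,-\beta)$, and $\|T\|=\omega(T)$ for self-adjoint $T$) is also sound. The paper gives no proof of its own: it cites \cite{kit} and presents the identity as an equivalent reformulation of \eqref{yam}, which is precisely what your first argument makes explicit.
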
 
In \cite{sheikh},  Sheikhhosseini, Khosravi and Sababheh  
  defined  the $ t- $weighted real  and  imaginary parts  of  $A \in\mathbb{B}(\mathcal H)$ as:
 \begin{equation}\label{def1}
 \Re_t (A) \,  :=tA+ (1-t)A^*\qquad \text{and}\qquad \Im_t(A) \, :=\frac{tA-(1-t)A^*}{i}.
 \end{equation}
The following identities follow directly from the definitions of $\Re_{t}$ and $\Im_{t}$.
\begin{proposition}
Let $A\in\mathcal{B}(\mathcal{H})$ and let $0\leq t \leq 1.$ Then
$$\Im_{t}(iA)  = \Re_{t} (A), ~ \Re_{t} (iA) = -\Im_{t}(A),$$
\begin{equation*}
\Re_{t} (A) =\Re (A) +(2t-1)i \Im(A),
\end{equation*}
\begin{equation*}
\Im_{t}(A) =\Im(A)- (2t-1)i \Re (A),
\end{equation*}
and
\begin{equation}\label{eq000}
\Re_{t}(A)+i\Im_{t}(A)=2t A.
\end{equation}
\end{proposition}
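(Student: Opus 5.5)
The plan is a direct computation from the definitions \eqref{def1}, using the standard Cartesian decomposition $A=\Re(A)+i\Im(A)$ with $\Re(A)=\frac{A+A^*}{2}$ and $\Im(A)=\frac{A-A^*}{2i}$. Equivalently, $A^*=\Re(A)-i\Im(A)$. I will establish the identities in the order stated. No step is a genuine obstacle. The only point needing care is tracking the factors of $i$, in particular $(iA)^*=-iA^*$ and $1/i=-i$.

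\emph{The identities involving $iA$.} Since $(iA)^*=-iA^*$, the definition of $\Im_t$ gives
\[
\Im_t(iA)=\frac{t(iA)-(1-t)(-iA^*)}{i}=\frac{i\bigl(tA+(1-t)A^*\bigr)}{i}=\Re_t(A).
\]
Similarly,
\[
\Re_t(iA)=itA-i(1-t)A^*=i\bigl(tA-(1-t)A^*\bigr)=i\cdot i\,\Im_t(A)=-\Im_t(A).
\]

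\emph{The Cartesian forms.} Substitute $A=\Re(A)+i\Im(A)$ and $A^*=\Re(A)-i\Im(A)$ into $\Re_t(A)=tA+(1-t)A^*$. The $\Re(A)$ terms carry the coefficient $t+(1-t)=1$. The $\Im(A)$ terms carry $i\bigl(t-(1-t)\bigr)=(2t-1)i$. This yields $\Re_t(A)=\Re(A)+(2t-1)i\Im(A)$.

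For $\Im_t$, the same substitution gives $tA-(1-t)A^*=(2t-1)\Re(A)+i\Im(A)$. Dividing by $i$, that is, multiplying by $-i$, gives $\Im(A)-(2t-1)i\Re(A)$.

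\emph{Identity \eqref{eq000}.} This is immediate from the definitions:
\[
\Re_t(A)+i\Im_t(A)=tA+(1-t)A^*+tA-(1-t)A^*=2tA.
\]
As a check, the two Cartesian forms also give $\Re(A)+(2t-1)i\Im(A)+i\Im(A)+(2t-1)\Re(A)=2t\bigl(\Re(A)+i\Im(A)\bigr)$, which agrees.
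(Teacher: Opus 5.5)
Your computation is correct and matches the paper's approach: the paper gives no written proof and only remarks that these identities follow directly from the definitions of $\Re_t$ and $\Im_t$. Your use of $(iA)^*=-iA^*$ and the Cartesian decomposition $A=\Re(A)+i\Im(A)$ supplies exactly that routine verification, and every sign and factor of $i$ checks out.
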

 Using to the definition of $ \Re_{t}(.) $ in   (\ref{def1}) and  relation (\ref{yam}), they introduced  the following definition for  the weighted numerical radius by 
 $$\omega^{\Re}_t(A) \, :=\sup_{\theta}\|\Re_t(e^{i\theta}A)\|.$$
 In particular, $$\omega^{\Re}_0(A)=\omega^{\Re}_1(A)=\|A\|,\qquad\omega^{\Re}_{1/2}(A)=\omega(A).$$
 
 The following theorem summarizes several fundamental properties of $ \omega^{\Re}_t (.). $
 \begin{theorem}\cite{sheikh}
 Let $A\in\mathbb{B}(\mathcal H)$ and $0\leq t\leq1$. Then
 \begin{enumerate}
 \item $\omega^{\Re}_t(.)$ defines a norm on $\mathbb{B}(\mathcal H)$ which is equivalent to the numerical range and operator norm;
 \item $\omega^{\Re}_t(A)=\omega^{\Re}_t(A^*)=\omega^{\Re}_{1-t}(A)$;
 \item $\max\{t,1-t\}\|A\|\leq\omega^{\Re}_t(A)\leq\|A\|$;
 \item If $A$ is a normal operator, then $\omega^{\Re}_t(A)=\|A\|$;
 \item $\omega^{\Re}_t(A)\leq \omega^{\Re}_s(A)$ if and only if $|t-\frac{1}{2}|\leq|s-\frac{1}{2}|$.
   \end{enumerate}
   \end{theorem}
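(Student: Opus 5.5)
The plan is to work throughout with the explicit formula
$$\Re_t(e^{i\theta}A)=t e^{i\theta}A+(1-t)e^{-i\theta}A^*,$$
and to prove the five items in the order (2), (3), (1), (4), (5), since the norm property needs the lower bound in (3).

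\emph{Item (2).} Take adjoints: $\Re_t(e^{i\theta}A)^*=te^{-i\theta}A^*+(1-t)e^{i\theta}A=\Re_{1-t}(e^{i\theta}A)$. Since $\|X^*\|=\|X\|$, taking the supremum over $\theta$ gives $\omega^{\Re}_t(A)=\omega^{\Re}_{1-t}(A)$. The same expression also equals $\Re_t(e^{-i\theta}A^*)$. As $\theta$ runs over all angles so does $-\theta$, so $\omega^{\Re}_t(A^*)=\omega^{\Re}_{1-t}(A)$.

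\emph{Item (3).} The upper bound is the triangle inequality: $\|\Re_t(e^{i\theta}A)\|\le t\|A\|+(1-t)\|A\|$. For the lower bound, fix unit vectors $x,y$ and set $a=\langle Ax,y\rangle$, $b=\langle Ay,x\rangle$. Then
$$\langle \Re_t(e^{i\theta}A)x,y\rangle=e^{i\theta}\bigl(ta+(1-t)e^{-2i\theta}\bar b\bigr).$$
Choosing $\theta$ to align the phases of $ta$ and $(1-t)e^{-2i\theta}\bar b$ gives
$$\omega^{\Re}_t(A)\ge t|a|+(1-t)|b|\ge t|\langle Ax,y\rangle|.$$
Taking the supremum over $x,y$ yields $\omega^{\Re}_t(A)\ge t\|A\|$. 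Applying this to $1-t$ and using (2) gives $\omega^{\Re}_t(A)\ge(1-t)\|A\|$.

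\emph{Item (1).} The map $A\mapsto \Re_t(e^{i\theta}A)$ is real linear, so $\omega^{\Re}_t$ is a supremum of seminorms and is therefore subadditive. For $\lambda=|\lambda|e^{i\varphi}$ we have $\Re_t(e^{i\theta}\lambda A)=|\lambda|\Re_t(e^{i(\theta+\varphi)}A)$. The phase is absorbed into the supremum over $\theta$, which gives complex homogeneity. Definiteness and equivalence with $\|\cdot\|$ come from (3), since $\max\{t,1-t\}\ge\frac12$. Equivalence with $\omega(\cdot)$ then follows from \eqref{eq1}.

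\emph{Item (4).} If $A$ is normal, then $\Re_t(e^{i\theta}A)=f_\theta(A)$ with $f_\theta(z)=te^{i\theta}z+(1-t)e^{-i\theta}\bar z$. This operator is normal, so its norm equals $\sup_{z\in\sigma(A)}|f_\theta(z)|$. Pick $\lambda\in\sigma(A)$ with $|\lambda|=\|A\|$ and choose $\theta=-\arg\lambda$. Then $f_\theta(\lambda)=\|A\|$, so $\omega^{\Re}_t(A)\ge\|A\|$, and (3) gives equality.

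\emph{Item (5)} is where I expect the main difficulty, mostly in interpreting the converse. For the ``if'' direction, note that $t\mapsto\Re_t(e^{i\theta}A)$ is affine in $t$. Hence $t\mapsto\omega^{\Re}_t(A)$ is a supremum of convex functions and is convex on $[0,1]$. By (2) it is symmetric about $\frac12$. A convex function symmetric about $\frac12$ is nondecreasing in $|t-\frac12|$, which is the claimed implication.

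The ``only if'' direction cannot hold for each fixed $A$: for normal $A$ all values coincide by (4). I would therefore read it as the inequality holding for all $A$. To prove that version, take the nilpotent $2\times2$ matrix unit $E=E_{12}$. Then $\Re_t(e^{i\theta}E)=\begin{pmatrix}0&te^{i\theta}\\(1-t)e^{-i\theta}&0\end{pmatrix}$, whose norm is $\max\{t,1-t\}=\frac12+|t-\frac12|$ for every $\theta$. This is strictly increasing in $|t-\frac12|$. So $\omega^{\Re}_t(E)\le\omega^{\Re}_s(E)$ forces $|t-\frac12|\le|s-\frac12|$.
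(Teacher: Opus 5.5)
Your proof is correct. Every computation checks out, including $\langle \Re_t(e^{i\theta}A)x,y\rangle=e^{i\theta}\bigl(ta+(1-t)e^{-2i\theta}\bar b\bigr)$ and $\|\Re_t(e^{i\theta}E_{12})\|=\max\{t,1-t\}$.

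The paper gives no proof of this theorem; it only quotes it from \cite{sheikh}. The natural comparison is therefore with tools the paper uses elsewhere.

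\textbf{Lower bound in (3).} Using \eqref{eq000} together with $\Im_t(A)=\Re_t(-iA)$, one gets in one line
$2t\|A\|=\|\Re_t(A)+i\Re_t(e^{-i\pi/2}A)\|\le 2\omega^{\Re}_t(A)$,
and then (2) gives the bound with $1-t$. Your phase-alignment argument is just as short. It has a bonus: taking $x=y$ gives $\omega^{\Re}_t(A)\ge|\langle Ax,x\rangle|$, which is the first inequality of Theorem~\ref{2.5}.

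\textbf{Item (4).} That bonus lets you drop the functional calculus. For normal $A$ you get $\|A\|=\omega(A)\le\omega^{\Re}_t(A)\le\|A\|$. Equivalently, (4) is the case $s=\frac12$ of the monotonicity you prove in (5), so reordering (5) before (4) makes (4) immediate.

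\textbf{Converse in (5).} Your remark is right and worth keeping. Read literally for a fixed $A$, the ``only if'' is false: for any normal $A$ the map $t\mapsto\omega^{\Re}_t(A)$ is constant. So the universal reading is the only tenable one. Add that your example needs $\dim\mathcal H\ge 2$: realize $E_{12}$ as $x\mapsto\langle x,e_2\rangle e_1$ for orthonormal $e_1,e_2$. In dimension one every operator is a scalar, and even the universal version fails.
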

 The following properties follow directly from the definition of $ \omega^{\Re}_t(.) $:
  \begin{proposition}\cite{sheikh}
  Let $A\in\mathbb{B}(\mathcal H)$ and $0\leq t\leq1$. Then
   \begin{enumerate}
  \item $\omega^{\Re}_t(U^{*}AU)=\omega^{\Re}_t(A) $ for every unitary  $ U \in\mathbb{B}(\mathcal H); $
  \item $ \omega^{\Re}_t \left( \begin{bmatrix}
  A & 0\\
  0& B
\end{bmatrix}\right) =\max \lbrace \omega^{\Re}_t( A), \omega^{\Re}_t( B)\rbrace.  $
\end{enumerate}
\end{proposition}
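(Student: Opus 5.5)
Both parts follow from how $\Re_t$ interacts with unitary conjugation and with direct sums, combined with the supremum over $\theta$ in the definition $\omega^{\Re}_t(A)=\sup_{\theta}\|\Re_t(e^{i\theta}A)\|$.

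For part (1), fix a unitary $U$ and a real $\theta$. Since $(U^*AU)^*=U^*A^*U$ and the map $X\mapsto U^*XU$ is linear, we get
\[
\Re_t(e^{i\theta}U^*AU)=t\,U^*(e^{i\theta}A)U+(1-t)\,U^*(e^{-i\theta}A^*)U=U^*\Re_t(e^{i\theta}A)U .
\]
The operator norm is unitarily invariant, so $\|U^*\Re_t(e^{i\theta}A)U\|=\|\Re_t(e^{i\theta}A)\|$ for every $\theta$. Taking the supremum over $\theta$ then gives $\omega^{\Re}_t(U^*AU)=\omega^{\Re}_t(A)$.

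For part (2), put $T=\begin{bmatrix}A&0\\0&B\end{bmatrix}$. Then $T^*=\begin{bmatrix}A^*&0\\0&B^*\end{bmatrix}$, and hence
\[
\Re_t(e^{i\theta}T)=\begin{bmatrix}\Re_t(e^{i\theta}A)&0\\0&\Re_t(e^{i\theta}B)\end{bmatrix}.
\]
We use the standard fact that the norm of a block-diagonal operator is the maximum of the norms of its diagonal blocks. This holds because $\|(X\oplus Y)(x,y)\|^2=\|Xx\|^2+\|Yy\|^2\le\max\{\|X\|,\|Y\|\}^2\|(x,y)\|^2$, and equality is approached by choosing vectors supported in a single block. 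Therefore
\[
\|\Re_t(e^{i\theta}T)\|=\max\{\|\Re_t(e^{i\theta}A)\|,\|\Re_t(e^{i\theta}B)\|\}
\]
for each $\theta$.

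It remains to interchange the supremum over $\theta$ with the maximum of the two terms. The supremum of a pointwise maximum of two functions equals the maximum of their separate suprema. Applying this to the two functions of $\theta$ above gives $\omega^{\Re}_t(T)=\max\{\omega^{\Re}_t(A),\omega^{\Re}_t(B)\}$.

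The whole argument is routine. The only step that needs any justification is the block-diagonal norm identity, and it is settled by the short computation given above.
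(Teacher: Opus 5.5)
Your proof is correct and takes the same direct-from-the-definition route the paper intends. The paper gives no proof and only remarks that these properties follow directly from the definition of $\omega^{\Re}_t$. The identities $\Re_t(e^{i\theta}U^*AU)=U^*\Re_t(e^{i\theta}A)U$ and $\Re_t(e^{i\theta}(A\oplus B))=\Re_t(e^{i\theta}A)\oplus\Re_t(e^{i\theta}B)$, combined with unitary invariance of the operator norm, the block-diagonal norm identity and the interchange of $\sup_\theta$ with $\max$, are exactly what is needed.
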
   
The following representation extends identity \eqref{kit} to the weighted setting.
\begin{align*}\omega^\Re_t(A)&=\sup_{\alpha^2+\beta_2=1}\|\alpha \Re_t (A)+\beta\Im_t (A)\|\\
&=\sup_{\alpha^2+\beta_2=1}\|\alpha \Re (A)+\beta\Im (A)+i(2t-1)(\alpha \Im (A)-\beta \Re(A))\|.
\end{align*}
The following theorem was proved in \cite{sheikh}; for completeness,  we provide a more simple proof for it. 
\begin{theorem}\label{2.5}
Let $A\in\mathbb{B}(\mathcal H)$ and $0\leq t\leq1$. Then
$$\omega(A)\leq\omega^\Re_t(A)\leq2R\ \omega(A),$$
where $R=\max\{t,1-t\}$.
\end{theorem}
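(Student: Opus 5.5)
Both inequalities will be read off from the decomposition $\Re_t(B)=tB+(1-t)B^*$ applied to $B=e^{i\theta}A$, combined with Yamazaki's identity \eqref{yam}. The lower bound uses one test vector, and the upper bound uses a convex-combination splitting of $\Re_t$. By the symmetry $\omega^\Re_t=\omega^\Re_{1-t}$ we may assume $t\ge \tfrac12$, so $R=t$, although the argument below is symmetric anyway.

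\emph{Lower bound.} Fix a unit vector $x$ and a real $\theta$. Then
\[
\langle \Re_t(e^{i\theta}A)x,x\rangle = t e^{i\theta}\langle Ax,x\rangle + (1-t)\overline{e^{i\theta}\langle Ax,x\rangle}.
\]
Choose $\theta$ so that $e^{i\theta}\langle Ax,x\rangle=|\langle Ax,x\rangle|$ is real and nonnegative. The right-hand side then equals $|\langle Ax,x\rangle|$. Consequently
\[
|\langle Ax,x\rangle|\le \|\Re_t(e^{i\theta}A)\|\le \omega^\Re_t(A),
\]
and taking the supremum over $x$ gives $\omega(A)\le\omega^\Re_t(A)$.

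\emph{Upper bound.} Write
\[
\Re_t(B)=tB+(1-t)B^* = (1-t)(B+B^*) + (2t-1)B = 2(1-t)\Re(B) + (2t-1)B.
\]
This holds for $t\ge\tfrac12$; for $t\le \tfrac12$ use instead $\Re_t(B)=2t\Re(B)+(1-2t)B^*$. With $B=e^{i\theta}A$, the triangle inequality gives
\[
\|\Re_t(e^{i\theta}A)\|\le 2(1-t)\|\Re(e^{i\theta}A)\| + (2t-1)\|A\| \le 2(1-t)\,\omega(A)+(2t-1)\cdot 2\omega(A).
\]
Here the first term is controlled by \eqref{yam}, and the second by $\|A\|\le 2\omega(A)$ from \eqref{eq1}. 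The resulting coefficient is $2(1-t)+2(2t-1)=2t=2R$. Taking the supremum over $\theta$ yields $\omega^\Re_t(A)\le 2R\,\omega(A)$.

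The main obstacle is finding a splitting in which the loss is exactly $2R$ rather than something cruder. For instance, bounding $\|tB\|+\|(1-t)B^*\|$ directly only gives $\|A\|\le 2\omega(A)$, which is weaker at $t=\tfrac12$. Isolating the $\Re(B)$ part, which costs only $\omega(A)$, and paying the factor $2$ only on the defect $|2t-1|$ is the key point. This splitting also reproduces the endpoint cases $\omega^\Re_{1/2}=\omega$ and $\omega^\Re_1=\|A\|\le 2\omega(A)$.
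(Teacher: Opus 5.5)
Your proof is correct, but it takes a different route from the paper's. The paper uses the representation $\omega^\Re_t(A)=\sup_{\alpha^2+\beta^2=1}\|\alpha\Re_t(A)+\beta\Im_t(A)\|$ with the splitting $\Re_t(A)=\Re(A)+(2t-1)i\Im(A)$. The triangle inequality then charges $\omega(A)$ to each piece, since both $\sup\|\alpha\Re(A)+\beta\Im(A)\|$ and $\sup\|\alpha\Im(A)-\beta\Re(A)\|$ equal $\omega(A)$ by \eqref{kit}. This gives $(1+|2t-1|)\,\omega(A)=2R\,\omega(A)$ in one line, with no case split and without passing through $\|A\|$. However, it proves only the upper bound; the lower bound is left to the results cited from \cite{sheikh}, e.g.\ item (5) of the quoted theorem with $t=\frac12$. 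Your splitting $\Re_t(B)=2(1-t)\Re(B)+(2t-1)B$ (and its mirror for $t\le\frac12$) is the convex interpolation of $t$ between $\frac12$ and $1$. It needs only \eqref{yam}, not the weighted $(\alpha,\beta)$-representation, which the paper states without proof. It also gives something stronger: before you invoke $\|A\|\le2\omega(A)$, you already have $\omega^\Re_t(A)\le 2r\,\omega(A)+(1-2r)\|A\|$ with $r=\min\{t,1-t\}$. That is exactly the right-hand inequality of the corollary after Theorem \ref{t1}, which the paper obtains only through Dragomir's Jensen-functional bounds. Finally, your test-vector argument proves the lower bound $\omega(A)\le\omega^\Re_t(A)$ directly, which the paper's proof does not.
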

\begin{proof} By the triangle inequality, we obtain
\begin{align*}
\|\alpha \Re_t (A)+\beta\Im_t (A)\|&=\|\alpha \Re (A)+\beta\Im (A)+i(2t-1)(\alpha \Im (A)-\beta \Re(A))\|\\
&\leq \|\alpha \Re (A)+\beta\Im (A)\|+|2t-1|\|\alpha \Im (A)-\beta \Re(A)\|.
\end{align*}
Taking the supremum over all real numbers $\alpha,\beta$ with $\alpha^2+\beta^2=1$, completes the proof.
\end{proof}
In the following theorem,  in view of  a  refinement and reverse of the classical Jensen inequality \cite{jen},   we present upper and lower bounds for the difference between the two sides of this inequality in terms of $ \omega^{\Re}_t(.). $

\begin{theorem}\label{t1}
Let $A\in\mathcal B(H).$ Then for every $ 0< t < 1, $

$$ \dfrac{\| A \| - \omega^{\Re}_t(A)}{2R}  \leq \| A \| - \omega (A) \leq \dfrac{\| A \| - \omega^{\Re}_t(A)}{2r},  $$
where  $r=\min\{t,1-t\}$ and $R=\max\{t,1-t\}$.
\end{theorem}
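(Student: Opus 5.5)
The plan is to write $\Re_t(e^{i\theta}A)$ as an affine combination of $\Re(e^{i\theta}A)$ and either $e^{i\theta}A$ or $(e^{i\theta}A)^*$. The triangle inequality then gives both estimates, in the spirit of a Jensen-type refinement and reverse. By the earlier property $\omega^{\Re}_t(A)=\omega^{\Re}_{1-t}(A)$, the quantities $r,R$ and both sides of the claim are invariant under $t\mapsto 1-t$. So I will assume $0<t\le \tfrac12$, so that $r=t$ and $R=1-t$.

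Fix $\theta$ and put $B=e^{i\theta}A$, so that $\|B\|=\|B^*\|=\|A\|$. For the right-hand inequality I will use the decomposition
\[
\Re_t(B)=tB+(1-t)B^*=2t\,\Re(B)+(1-2t)B^* .
\]
Since $1-2t\ge 0$, the triangle inequality gives $\|\Re_t(B)\|\le 2r\|\Re(B)\|+(1-2r)\|A\|$. Taking the supremum over $\theta$ and using Yamazaki's identity \eqref{yam}, I get $\omega^{\Re}_t(A)\le 2r\,\omega(A)+(1-2r)\|A\|$. Rearranged, this is $\|A\|-\omega^{\Re}_t(A)\ge 2r(\|A\|-\omega(A))$, which is the right-hand inequality because $r>0$.

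For the left-hand inequality I will use the other decomposition
\[
\Re_t(B)=2(1-t)\,\Re(B)-(1-2t)B .
\]
The reverse triangle inequality gives $\|\Re_t(B)\|\ge 2R\|\Re(B)\|-(2R-1)\|A\|$. This lower bound holds for each $\theta$, and $\sup_\theta\|\Re_t(e^{i\theta}A)\|\ge \|\Re_t(e^{i\theta}A)\|$ for each $\theta$. Hence taking the supremum of the right-hand side over $\theta$ yields $\omega^{\Re}_t(A)\ge 2R\,\omega(A)-(2R-1)\|A\|$. This is equivalent to $\|A\|-\omega^{\Re}_t(A)\le 2R(\|A\|-\omega(A))$, and dividing by $2R$ gives the claimed left-hand inequality.

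The main point needing care is finding the two algebraic decompositions with nonnegative coefficients. Once these are in hand, the rest is the triangle inequality and the supremum step. The only other thing to check is the reduction to $t\le\tfrac12$. For $t\ge\tfrac12$ one can argue directly instead: swap the roles of $B$ and $B^*$ in the two decompositions, and use $\|\Re(B)\|=\|\Re(B^*)\|$.
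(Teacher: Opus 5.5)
Your proof is correct, and it takes a different route from the paper. The paper gets the result in one line from Dragomir's two-sided bounds for the normalized Jensen functional of a convex function on $[0,1]$. It applies these to $f(t)=\omega^{\Re}_t(A)$, using $f(0)=f(1)=\|A\|$ and $f(\tfrac12)=\omega(A)$. That argument tacitly needs $t\mapsto\omega^{\Re}_t(A)$ to be convex, which the paper never states. It does hold, because $t\mapsto\Re_t(e^{i\theta}A)$ is affine and a supremum of norms of affine maps is convex.

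You work at the operator level instead. Your two identities say that $\Re_t(B)=2t\,\Re_{1/2}(B)+(1-2t)\Re_0(B)$ and $\Re_{1/2}(B)=\frac{1}{2(1-t)}\Re_t(B)+\frac{1-2t}{2(1-t)}\Re_1(B)$ are convex combinations. These are exactly the two convex-combination relations behind Dragomir's inequality in the two-point case. So you are in effect reproving that lemma for the affine family $t\mapsto\Re_t(B)$, before taking the supremum over $\theta$.

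Your route buys self-containedness: it needs no external lemma and no separate convexity check. Your intermediate bounds $(1-2R)\|A\|+2R\,\omega(A)\le\omega^{\Re}_t(A)\le(1-2r)\|A\|+2r\,\omega(A)$ are precisely the paper's next corollary, so you get that for free. The paper's route is shorter once the lemma is granted, and it is a reusable template. It applies to any convex function with known values at $0$, $\tfrac12$, $1$, which is how the paper then handles the Hermite--Hadamard refinement.
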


\begin{proof}
In \cite{jen}, the authers   showed that if  $ f:[0, 1] \rightarrow \mathbb{R} $ is a convex function, then for every $ 0 < t < 1, $
$$ \dfrac{(1-t) f(0)+t f(1)-f(t)}{2R}\leq \dfrac{f(0)+f(1)}{2}-f(\dfrac{1}{2}) \leq  \dfrac{(1-t) f(0)+t f(1)-f(t)}{2r}, $$
where $r=\min\{t,1-t\}$ and $R=\max\{t,1-t\}$. Employing this for $ f(t)=\omega^{\Re}_t(A) $ and the fact that $ \omega^{\Re}_{0}(A)= \omega^{\Re}_{1}(A) = \| A \|, \omega^{\Re}_{1/2}(A)=\omega(A), $
we reach the desired result.
\end{proof}
\begin{corollary}
Let $A\in\mathcal B(H).$ Then for every $ 0< t < 1, $

$$ (1-2R) \| A \| +2R \omega(A) \leq \omega^{\Re}_t(A) \leq (1-2r) \| A \| +2r \omega(A) ,  $$
where  $r=\min\{t,1-t\}$ and $R=\max\{t,1-t\}$.
\end{corollary}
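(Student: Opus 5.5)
This corollary is a pure rearrangement of the two inequalities in Theorem~\ref{t1}, so the plan is to treat each side separately. Throughout we use $0<r\le R<1$ when $0<t<1$, so $2r>0$ and $2R>0$ and multiplying or dividing by them preserves inequalities.

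For the upper bound on $\omega^{\Re}_t(A)$, take the right-hand inequality of Theorem~\ref{t1},
\[
\|A\|-\omega(A)\le \frac{\|A\|-\omega^{\Re}_t(A)}{2r}.
\]
Multiplying by $2r$ gives $2r\|A\|-2r\,\omega(A)\le \|A\|-\omega^{\Re}_t(A)$. Solving for $\omega^{\Re}_t(A)$ yields
\[
\omega^{\Re}_t(A)\le (1-2r)\|A\|+2r\,\omega(A).
\]

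For the lower bound, use the left-hand inequality of Theorem~\ref{t1},
\[
\frac{\|A\|-\omega^{\Re}_t(A)}{2R}\le \|A\|-\omega(A).
\]
Multiplying by $2R$ gives $\|A\|-\omega^{\Re}_t(A)\le 2R\|A\|-2R\,\omega(A)$. Rearranging yields
\[
(1-2R)\|A\|+2R\,\omega(A)\le \omega^{\Re}_t(A).
\]

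There is no real obstacle here. The only point needing care is the sign of the multipliers, which is why the hypothesis $0<t<1$ is needed: it makes $r>0$. One may also remark that $1-2R\le 0\le 1-2r$, so the lower bound is a genuine extrapolation while the upper bound is a convex combination of $\|A\|$ and $\omega(A)$. This is consistent with Theorem~\ref{2.5} and property (3) of $\omega^{\Re}_t$.
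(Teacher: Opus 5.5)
Your proof is correct and is the intended argument. The paper gives no separate proof: the corollary follows from Theorem~\ref{t1} by multiplying its two inequalities by $2R>0$ and $2r>0$ respectively and solving for $\omega^{\Re}_t(A)$, exactly as you do.
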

\begin{corollary}
Let $A\in\mathcal B(H).$ Then 

$$ \left(3 \omega(A)- \| A \| \right) \leq 4\int_{0}^{1/2} \omega^{\Re}_t(A) dt  \leq \left( \| A \| + \omega(A) \right)  $$
and
$$ \left(3 \omega(A)- \| A \| \right) \leq 2\int_{0}^{1} \omega^{\Re}_t(A) dt  \leq \left( \| A \| + \omega(A) \right),  $$

\end{corollary}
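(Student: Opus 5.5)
The plan is to integrate the two-sided bound of the preceding corollary over $t$, using the symmetry $\omega^{\Re}_t(A)=\omega^{\Re}_{1-t}(A)$ to pass from $[0,1/2]$ to $[0,1]$.

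\textbf{Restricting to $[0,1/2]$.} For $0<t\le 1/2$ we have $r=t$ and $R=1-t$. The preceding corollary then reads
\[
(2t-1)\|A\|+2(1-t)\,\omega(A)\le \omega^{\Re}_t(A)\le (1-2t)\|A\|+2t\,\omega(A).
\]
The function $t\mapsto\omega^{\Re}_t(A)$ is a supremum of norms of affine functions of $t$, so it is convex and hence continuous. In particular it is integrable on $[0,1/2]$, and the inequality extends to $t=0$.

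\textbf{Integrating the bounds.} Integrate both sides over $[0,1/2]$, using
\[
\int_0^{1/2}(2t-1)\,dt=-\tfrac14,\quad \int_0^{1/2}2(1-t)\,dt=\tfrac34,\quad \int_0^{1/2}(1-2t)\,dt=\tfrac14,\quad \int_0^{1/2}2t\,dt=\tfrac14 .
\]
This gives
\[
-\tfrac14\|A\|+\tfrac34\,\omega(A)\le \int_0^{1/2}\omega^{\Re}_t(A)\,dt\le \tfrac14\|A\|+\tfrac14\,\omega(A).
\]
Multiplying by $4$ yields the first claimed inequality.

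\textbf{Passing to $[0,1]$.} Property (2) of the theorem from \cite{sheikh} gives $\omega^{\Re}_t(A)=\omega^{\Re}_{1-t}(A)$. The substitution $t\mapsto 1-t$ therefore shows
\[
\int_0^1\omega^{\Re}_t(A)\,dt=2\int_0^{1/2}\omega^{\Re}_t(A)\,dt,
\]
so $2\int_0^1=4\int_0^{1/2}$ and the second inequality follows from the first.

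The only point needing care is measurability and integrability of $t\mapsto\omega^{\Re}_t(A)$, and convexity settles it. Everything else is elementary integration.
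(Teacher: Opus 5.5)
Your proof is correct and follows the intended argument. The paper states this corollary without proof, as a consequence of integrating the two-sided bound of the preceding corollary. Your values $r=t$, $R=1-t$ on $[0,\tfrac12]$, the integrals $-\tfrac14,\tfrac34,\tfrac14,\tfrac14$, and the symmetry $\omega^{\Re}_t(A)=\omega^{\Re}_{1-t}(A)$ give both inequalities; equivalently, one can integrate the bound directly over $[0,1]$, since $r$ and $R$ are symmetric about $\tfrac12$.

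One small imprecision: convexity alone only gives continuity on the open interval, not at the endpoints. This does not affect the argument, because boundedness plus interior continuity, or the Lipschitz estimate $|\omega^{\Re}_t(A)-\omega^{\Re}_s(A)|\le 2|t-s|\,\|A\|$, already gives integrability.
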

\begin{proposition}
Let $A\in\mathcal B(H).$ Then for every $ 0< t < 1, $ we have
$$ \omega (A) \leq  t \omega^{\Re}_{\frac{t}{2}}(A) +(1-t) \omega^{\Re}_{\frac{1+t}{2}} (A) \leq 
\int_{0}^{1} \omega^{\Re}_{t}(A) dt \leq \frac{1}{2}( \omega^{\Re}_t(A)+ \| A \| ) \leq \| A \|.$$
\end{proposition}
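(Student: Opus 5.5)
This chain of inequalities is the Hermite--Hadamard inequality together with its standard refinements, applied to the function $f(s)=\omega^{\Re}_s(A)$ on $[0,1]$. Note that the integration variable clashes with the parameter $t$; we read the middle term as $\int_0^1 \omega^{\Re}_s(A)\,ds$. Three facts about $f$ are needed.

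\emph{Convexity.} For $\lambda\in[0,1]$ and $s_1,s_2\in[0,1]$ we have $\Re_{\lambda s_1+(1-\lambda)s_2}(X)=\lambda\Re_{s_1}(X)+(1-\lambda)\Re_{s_2}(X)$, since $\Re_s(X)=sX+(1-s)X^*$ is affine in $s$. Taking $X=e^{i\theta}A$, the triangle inequality gives
$$\|\Re_{\lambda s_1+(1-\lambda)s_2}(e^{i\theta}A)\|\le \lambda\|\Re_{s_1}(e^{i\theta}A)\|+(1-\lambda)\|\Re_{s_2}(e^{i\theta}A)\|.$$
Taking the supremum over $\theta$ then gives convexity of $f$.

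\emph{Symmetry and endpoint values.} By the earlier theorem, $f(s)=f(1-s)$, $f(0)=f(1)=\|A\|$ and $f(1/2)=\omega(A)$.

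\emph{Continuity.} $f$ is convex and finite on $[0,1]$, and bounded by $\|A\|$. By symmetry and convexity $f$ is nonincreasing on $[0,1/2]$, with $f\le f(0)$. So $f$ is integrable, and convexity gives continuity in the interior.

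\emph{First inequality.} Apply Hermite--Hadamard on the two halves $[0,1/2]$ and $[1/2,1]$, then the midpoint inequality. Write $m_1=t/2$ and $m_2=(1+t)/2$. Since $t\cdot m_1+(1-t)\cdot m_2=\tfrac{t^2+(1-t)(1+t)}{2}=\tfrac12$, convexity gives
$$\omega(A)=f(\tfrac12)\le t f(\tfrac t2)+(1-t)f(\tfrac{1+t}2).$$

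\emph{Second inequality.} Split $\int_0^1 f=\int_0^t f+\int_t^1 f$. On $[0,t]$, Hermite--Hadamard gives $\int_0^t f\ge t f(t/2)$. On $[t,1]$, whose midpoint is $(1+t)/2$, it gives $\int_t^1 f\ge(1-t)f(\tfrac{1+t}{2})$. Adding yields the second inequality.

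\emph{Third inequality.} The right-hand Hermite--Hadamard inequality on $[0,t]$ and $[t,1]$ gives
$$\int_0^1 f\le \tfrac t2\bigl(f(0)+f(t)\bigr)+\tfrac{1-t}{2}\bigl(f(t)+f(1)\bigr)=\tfrac12\bigl(f(t)+\|A\|\bigr),$$
using $f(0)=f(1)=\|A\|$. This is exactly the third inequality, with $f(t)=\omega^{\Re}_t(A)$.

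\emph{Last inequality.} It is immediate from $\omega^{\Re}_t(A)\le\|A\|$.

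The only substantive step is establishing convexity of $s\mapsto\omega^{\Re}_s(A)$. This is easy because $\Re_s$ is affine in $s$ and a supremum of convex functions is convex.
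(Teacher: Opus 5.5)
Your proof is correct and takes the same approach as the paper. The paper cites El Farissi's refinement $f(\tfrac12)\le l(t)\le\int_0^1 f\le L(t)\le \tfrac{f(0)+f(1)}{2}$ and applies it to $f(s)=\omega^{\Re}_s(A)$, using $f(0)=f(1)=\|A\|$ and $f(\tfrac12)=\omega(A)$; you instead re-derive that refinement via Jensen for the first step and the basic Hermite--Hadamard inequality on $[0,t]$ and $[t,1]$, and you also prove the convexity of $s\mapsto\omega^{\Re}_s(A)$, which the paper uses without proof (your opening remark about splitting at $\tfrac12$ is unnecessary, since the Jensen computation that follows it already suffices).
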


\begin{proof}
Using the following refinement of the Hermite–Hadamard inequality \cite{he-ha},
$$ f(\dfrac{1}{2}) \leq l(t) \leq \int_{0}^{1}  f(t) dt \leq L(t) \leq \dfrac{f(0)+f(1)}{2}$$
for a convex function $ f:[0, 1] \rightarrow \mathbb{R} $ and $ 0 < t < 1,  $ where
$$  l(t)= t f(\frac{t}{2})+(1-t) f(\frac{1+t}{2}) $$
and 
$$  L(t)=\dfrac{1}{2} \left( f(t)+t f(0)+(1-t)f(1)     \right).$$
 Employing this for $ f(t)=\omega^{\Re}_t(A) $ and the fact that $ \omega^{\Re}_{0}(A)= \omega^{\Re}_{1}(A) = \| A \|$ and $ \omega^{\Re}_{1/2}(A)=\omega(A), $
we reach the desired result.
\end{proof}

\section{A second weighted numerical radius}
In \cite{sab}, the authors, in view  of the definitions of $ \Re_t(A),  \Im_t(A)$ in relation (\ref{def1}) and equation (\ref{eq000}),    introduced a different generalization  of  the numerical radius in the following way.

For $A\in\mathbb{B}(H)$, define  $A_t$ by $A_t=\Re_t(A)+i\Im_{1-t}(A)=(1-2t)A^*+A.$ An  associated weighted numerical radius $ \omega_t(.) $ and  weighted operator norm $  \Vert.\Vert_{t}  $ are defined by
 $$\omega_t(A) \, :=\omega(A_t),   \qquad  \|A\|_t \, :=\|A_t\|.$$ 
 It follows immediately that
$$A_0=2\Re A,  \qquad  A_{1/2}=A,\qquad A_1=2i \Im A.$$
This weighted numerical radius differs significantly from $\omega^\Re_t(.)$ defined in the previous section.
In addition, $(A_t)^*=A^*_t$ and $(\lambda A)_t=\lambda A_t$ for every  $\lambda \in \mathbb{R}$  but does not hold in general for $ \lambda \in \mathbb{C}. $  Regarding the properties of $\omega$, we can state the following proposition.
\begin{proposition} Let $A,B \in\mathbb{ B}(\mathcal H)$ and let $0 \leq t \leq 1$. Then
\begin{enumerate}
\item $\omega_t(A)=\omega_t(A^*)$;
\item $\omega_t(\lambda A)=|\lambda|\omega_t(A)$ for every real number $\lambda$;
\item $\omega_t(A+B)\leq \omega_t(A)+\omega_t(B)$;
\item The function $f (t) = \omega_t (A)$ is convex on the interval $[0, 1]$.
\end{enumerate}
\end{proposition}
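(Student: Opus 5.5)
All four items will come from one observation: the map $A\mapsto A_t=(1-2t)A^*+A$ is real-linear. For $A,B\in\mathbb{B}(\mathcal H)$ and real $\lambda,\mu$ we have $(\lambda A+\mu B)_t=\lambda A_t+\mu B_t$, because $(\lambda A+\mu B)^*=\lambda A^*+\mu B^*$ when $\lambda,\mu$ are real. Moreover the map is affine in the parameter: for fixed $A$,
\[
A_t=(A+A^*)-2tA^*,
\]
so it is affine in $t$. Each item then reduces to a property of the classical numerical radius $\omega(\cdot)$: it is a norm, and $\omega(X^*)=\omega(X)$, which follows from property (ii) of $W$.

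\emph{Item (1).} Compute $(A^*)_t=(1-2t)A+A^*$. This is not $(A_t)^*=(1-2t)A+A^*$ in a different form; it is literally the same operator. Hence $\omega_t(A^*)=\omega((A^*)_t)=\omega((A_t)^*)=\omega(A_t)=\omega_t(A)$, where the third equality uses $\omega(X^*)=\omega(X)$.

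\emph{Items (2) and (3).} For real $\lambda$ we have $(\lambda A)_t=\lambda A_t$, so $\omega_t(\lambda A)=\omega(\lambda A_t)=|\lambda|\,\omega(A_t)$. Likewise $(A+B)_t=A_t+B_t$, so the triangle inequality for $\omega$ gives $\omega_t(A+B)\le\omega(A_t)+\omega(B_t)=\omega_t(A)+\omega_t(B)$.

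\emph{Item (4).} Take $t,s\in[0,1]$ and $\lambda\in[0,1]$. Since $t\mapsto A_t$ is affine,
\[
A_{\lambda t+(1-\lambda)s}=\lambda A_t+(1-\lambda)A_s .
\]
Applying the triangle inequality and the homogeneity of $\omega$ for the nonnegative scalars $\lambda$ and $1-\lambda$ gives
\[
f(\lambda t+(1-\lambda)s)\le\lambda f(t)+(1-\lambda)f(s),
\]
which is convexity of $f(t)=\omega_t(A)$.

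There is no real obstacle here. The only point needing care is item (1): one must verify that $(A^*)_t$ and $(A_t)^*$ coincide, rather than use the false complex homogeneity of $A\mapsto A_t$. The same caution explains why item (2) is restricted to real $\lambda$.
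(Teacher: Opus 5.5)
Your proof is correct. The paper states this proposition without proof, and your argument is the routine verification it leaves implicit: real-linearity of $A\mapsto A_t=(1-2t)A^*+A$, the identity $(A^*)_t=(A_t)^*$, and affine dependence on $t$ reduce everything to $\omega$ being a norm with $\omega(X^*)=\omega(X)$, which are the same ingredients the paper uses for Theorem~\ref{3.3}.
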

\begin{remark}
Since $\omega_t(.)$ and $\|.\|_t$ are not homogeneous over $\mathbb{C}$, they  do not define  norms on $ \mathbb{B}(\mathcal H). $ 
\end{remark}

The following theorem is partly stated in \cite{sab}.
\begin{theorem}\label{3.3}
Let $A\in\mathbb{B}(\mathcal H)$ and $0\leq t\leq1$. Then
$$2r\ \omega(A)\leq\omega_t(A)\leq2R\ \omega(A),$$
where $r=\min\{t,1-t\}$ and $R=\max\{t,1-t\}$.
\end{theorem}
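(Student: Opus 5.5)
The plan is to rewrite $A_t$ in Cartesian form and compare quadratic forms pointwise, rather than going through the supremum representations \eqref{yam} or \eqref{kit}. Write $A=H+iK$ with $H=\Re A$ and $K=\Im A$ self-adjoint. Then $A^*=H-iK$, and
\begin{align*}
A_t=A+(1-2t)A^*=(2-2t)H+2t\,iK=2\bigl((1-t)H+i\,tK\bigr).
\end{align*}
This matches the special cases $A_0=2\Re A$, $A_{1/2}=A$ and $A_1=2i\Im A$ noted above, which is a useful sanity check.

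Next I evaluate the quadratic forms. Fix a unit vector $x$ and put $h=\langle Hx,x\rangle$ and $k=\langle Kx,x\rangle$; both are real because $H$ and $K$ are self-adjoint. Then
\begin{align*}
|\langle Ax,x\rangle|^2=h^2+k^2,\qquad |\langle A_tx,x\rangle|^2=4\bigl((1-t)^2h^2+t^2k^2\bigr).
\end{align*}
Since $r^2\le (1-t)^2\le R^2$ and $r^2\le t^2\le R^2$, this gives the pointwise estimate
\begin{align*}
2r\,|\langle Ax,x\rangle|\le |\langle A_tx,x\rangle|\le 2R\,|\langle Ax,x\rangle|
\end{align*}
for every unit vector $x$.

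Finally I take the supremum over all unit vectors $x$ in each of the three terms. Because the inequalities hold for each $x$ with the same constants, this yields $2r\,\omega(A)\le \omega(A_t)=\omega_t(A)\le 2R\,\omega(A)$, which is the statement of Theorem \ref{3.3}.

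I do not expect a genuine obstacle. The only point needing care is that $h$ and $k$ are real, so that $|h+ik|^2$ splits as $h^2+k^2$; this is exactly what makes the comparison work weight by weight. An alternative route for the upper bound would be the triangle inequality applied to $A_t$, analogous to the proof of Theorem \ref{2.5}. However, the lower bound $2r\,\omega(A)$ comes out most cleanly from the pointwise argument, so I will use that argument for both inequalities.
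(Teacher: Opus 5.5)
Your proof is correct, but it takes a different route from the paper's. The paper never decomposes $A$. It applies the triangle inequality and the reverse triangle inequality for the norm $\omega$ directly to $A_t=A+(1-2t)A^*$, using $\omega(A^*)=\omega(A)$, to get $\bigl(1-|1-2t|\bigr)\omega(A)\le\omega_t(A)\le\bigl(1+|1-2t|\bigr)\omega(A)$. It then observes that $1-|1-2t|=2r$ and $1+|1-2t|=2R$. On that route the lower bound is no harder than the upper one, since $\omega(X+Y)\ge|\omega(X)-\omega(Y)|$.

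The paper's argument uses nothing about $\omega$ beyond its being a norm invariant under $A\mapsto A^*$, so it transfers verbatim to any such norm. For example, it gives $2r\|A\|\le\|A\|_t\le 2R\|A\|$ for the operator norm. Your vector-by-vector comparison is not available there, because $\|A_tx\|^2$ contains a cross term in $H$ and $K$ with weight $t(1-t)$.

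In exchange, your argument is specific to the numerical radius but extracts more. It shows that $W(A_t)$ is exactly the image of $W(A)$ under the real-linear map $h+ik\mapsto 2(1-t)h+2t\,ik$, so the two-sided estimate holds pointwise. It also makes the sharpness of both constants evident. For $t\le\frac12$, equality holds on the right for self-adjoint $A$ and on the left for skew-adjoint $A$; for $t\ge\frac12$ the roles are exchanged.
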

\begin{proof}
By triangle inequality and properties of $\omega$, we have
$$\omega_t(A)=\omega(A+(1-2t)A^*)\leq\omega(A)+|1-2t|\omega(A^*)=(1+|1-2t|)\omega(A),$$
and
$$\omega_t(A)=\omega(A+(1-2t)A^*)\geq\big|\omega(A)-|1-2t|\omega(A^*)\big|=\big|1-|1-2t|\big|\omega(A).$$
\end{proof}
We conclude this section with the following refinement involving the weighted numerical radius.

Using weighted numerical radius, the following refinement of the first inequality in (\ref{eq1}) is obtained:
\begin{proposition}\cite{sab}
Let $A\in\mathbb{B}(\mathcal H).$  Then
$$  \frac{1}{2}\Vert  A \Vert \leq  \frac{1}{2} \left(   \Vert \Re(A)  \Vert +  \Vert \Im (A) \Vert  \right) \leq
\frac{2}{3} \left(   \int_{0}^{\frac{1}{2}} [ \omega_{t}(A)+ \omega((1-2t)A-A^{*})] dt  \right)  \leq \omega(A).$$
\end{proposition}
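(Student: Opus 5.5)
The plan is to prove the three inequalities separately. The key device is to write every operator in the middle expression in terms of $\Re A$ and $\Im A$, then compare $\omega$ of each such operator with its Hermitian parts.

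\emph{First inequality.} Since $A=\Re A+i\Im A$, the triangle inequality gives $\|A\|\le \|\Re A\|+\|\Im A\|$. Dividing by $2$ yields the first inequality.

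\emph{Rewriting the integral.} Substitute $s=1-2t$, so that $t\in[0,\tfrac12]$ corresponds to $s\in[0,1]$ and $dt=ds/2$. Then
\[
\int_0^{1/2}\big[\omega_t(A)+\omega((1-2t)A-A^*)\big]\,dt=\frac12\int_0^1\big[\omega(A+sA^*)+\omega(sA-A^*)\big]\,ds .
\]
Using $A=\Re A+i\Im A$ and $A^*=\Re A-i\Im A$, we get
\[
A+sA^*=(1+s)\Re A+i(1-s)\Im A,\qquad sA-A^*=(s-1)\Re A+i(1+s)\Im A .
\]

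\emph{Comparison fact.} For self-adjoint $X,Y$ we have $\omega(X+iY)\ge\max\{\|X\|,\|Y\|\}$. Indeed, $|\langle (X+iY)x,x\rangle|\ge|\langle Xx,x\rangle|$ and $|\langle (X+iY)x,x\rangle|\ge|\langle Yx,x\rangle|$, because these are the real and imaginary parts of $\langle (X+iY)x,x\rangle$. Moreover $\omega(X)=\|X\|$ and $\omega(Y)=\|Y\|$ for self-adjoint operators.

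\emph{Second inequality.} Applying the comparison fact to the two expressions above gives $\omega(A+sA^*)\ge(1+s)\|\Re A\|$ and $\omega(sA-A^*)\ge(1+s)\|\Im A\|$. Since $\int_0^1(1+s)\,ds=\tfrac32$, the middle quantity is at least
\[
\frac23\cdot\frac12\cdot\frac32\,\big(\|\Re A\|+\|\Im A\|\big)=\frac12\big(\|\Re A\|+\|\Im A\|\big).
\]

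\emph{Third inequality.} Here we use the triangle inequality for $\omega$ together with $\omega(A^*)=\omega(A)$, exactly as in the proof of Theorem~\ref{3.3}. This gives $\omega(A+sA^*)\le(1+s)\omega(A)$ and $\omega(sA-A^*)\le(1+s)\omega(A)$. Integrating, the middle quantity is at most
\[
\frac23\cdot\frac12\cdot\frac32\cdot 2\,\omega(A)=\omega(A).
\]

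The only step needing care is the change of variables together with the identification of the real and imaginary parts of $A+sA^*$ and $sA-A^*$. Once these are in hand, all three estimates are routine.
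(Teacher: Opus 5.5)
Your proof is correct and complete. The paper does not prove this proposition; it only quotes it from \cite{sab}, so there is no argument in the text to compare yours against.

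Your argument uses only tools already present in the paper. In the original variable, $A_t=2(1-t)\Re A+2t\,i\,\Im A$ and $(1-2t)A-A^*=-2t\Re A+2(1-t)\,i\,\Im A$. So your two lower bounds read $\omega_t(A)\ge 2(1-t)\|\Re A\|$ and $\omega((1-2t)A-A^*)\ge 2(1-t)\|\Im A\|$. Your upper bound for $\omega_t(A)$ is the case $R=1-t$ of Theorem~\ref{3.3} on $[0,\tfrac12]$, and the same triangle-inequality argument handles $(1-2t)A-A^*$.

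The only point you leave implicit is that the integrals make sense. This is immediate: both $s\mapsto\omega(A+sA^*)$ and $s\mapsto\omega(sA-A^*)$ are Lipschitz with constant $\omega(A)$, hence continuous on $[0,1]$.
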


\section{Extensions of the numerical radius via arbitrary norms}
 Another generalization \eqref{yam} has been presented in \cite {O-F}, where the authors noticed the use of the usual operator norm in the identity $\omega(A)=\sup\limits_{\theta}\|\Re(e^{i\theta}A)\|.$ 
 They generalized the numerical radius by replacing the operator norm  with an arbitrary norm $N(.).$

\begin{align}\label{eq_abo_amer}
\omega_N(A)=\sup_{\theta}N\left(\Re(e^{i\theta}A)\right),
\end{align}
where $N(\cdot)$ is a given norm on $\mathcal{B}(\mathcal{H}).$ 
This definition has the following properties:
\begin{theorem}
Let  $N(.)$ be a norm on $\mathbb{B}(\mathcal H)$. Then,   for every $A\in \mathbb{B}(\mathcal H)$,
\begin{enumerate}
\item $\omega_N(.)$ is a norm on $\mathbb{B}(\mathcal H)$;
\item $\omega_N(A^*)=\omega_N(A)$.
\end{enumerate}
\end{theorem}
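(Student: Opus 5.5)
We verify the norm axioms for $\omega_N$ directly from \eqref{eq_abo_amer}, and obtain (2) by reparametrizing $\theta$. First we check that $\omega_N(A)$ is finite. The map $\theta\mapsto \Re(e^{i\theta}A)=\frac12(e^{i\theta}A+e^{-i\theta}A^*)$ is norm-continuous on the compact set $[0,2\pi]$. If $N$ is equivalent to the operator norm (automatic in finite dimensions), then $N$ is continuous in the operator norm and the supremum is finite. In general we avoid continuity: the triangle inequality gives $N(\Re(e^{i\theta}A))\le \frac12(N(A)+N(A^*))$, using $N(e^{i\theta}A)=N(A)$ and $N(e^{-i\theta}A^*)=N(A^*)$, and this bound is finite.

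\textbf{Proof of (1).} We take the axioms in turn.

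\emph{Subadditivity.} Since $\Re$ is real-linear, $\Re(e^{i\theta}(A+B))=\Re(e^{i\theta}A)+\Re(e^{i\theta}B)$. Applying the triangle inequality for $N$ and taking suprema gives $\omega_N(A+B)\le\omega_N(A)+\omega_N(B)$.

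\emph{Homogeneity.} Write $\lambda=|\lambda|e^{i\varphi}$. Then $\Re(e^{i\theta}\lambda A)=|\lambda|\Re(e^{i(\theta+\varphi)}A)$. The shift $\theta\mapsto\theta+\varphi$ is a bijection of the circle, so $\omega_N(\lambda A)=|\lambda|\omega_N(A)$. This step is exactly where the new radius differs from $\omega_t$ of the previous section.

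\emph{Definiteness.} This is the main step. Suppose $\omega_N(A)=0$. Then $N(\Re(e^{i\theta}A))=0$ for every $\theta$, and since $N$ is a norm, $\Re(e^{i\theta}A)=0$ for all $\theta$. Taking $\theta=0$ gives $\Re A=0$. Taking $\theta=-\pi/2$ gives $\Re(-iA)=\Im A=0$. Hence $A=\Re A+i\Im A=0$.

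\textbf{Proof of (2).} The identity $\Re(e^{i\theta}A^*)=\Re\big((e^{-i\theta}A)^*\big)=\Re(e^{-i\theta}A)$ holds because $\Re(X^*)=\Re X$. As $\theta$ ranges over the circle, so does $-\theta$, so the two suprema coincide and $\omega_N(A^*)=\omega_N(A)$.
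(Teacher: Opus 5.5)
Your proof is correct and complete. The bound $N(\Re(e^{i\theta}A))\le\frac12(N(A)+N(A^*))$ settles finiteness without any continuity assumption on $N$. The shift $\theta\mapsto\theta+\varphi$ gives homogeneity, and $\theta\mapsto-\theta$ together with $\Re(X^*)=\Re X$ gives part (2). The identity $\Re(-iA)=\Im A$ gives definiteness.

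The paper itself gives no proof of this statement; it is quoted from Abu-Omar and Kittaneh \cite{O-F}, so there is no argument to compare against. Your finiteness bound is exactly the upper estimate in the Bottazzi--Conde theorem stated immediately afterwards.
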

In general, $\omega_N$ is not unitarily invariant. In fact, if $N(.)$ is weakly unitarily invariant, then $\omega_N(U^*AU)=\omega_N(A)$ for each unitary operator $U$.

Also, Bottazzi and Conde  \cite{bot} derived the following theorem.

\begin{theorem}
For every $A\in \mathbb{B}(\mathcal H)$ and every norm $N(.)$,
\begin{enumerate}
\item $\omega_N(A)=\sup_{\alpha^2+\beta^2=1}N(\alpha\Re(A)+\beta\Im(A)$;
\item $\max\{\frac{1}{2}N(A),\frac{1}{2}N(A^*)\}\leq\omega_N(A)\leq\frac{1}{2}(N(A)+N(A^*))$.
\end{enumerate}
\end{theorem}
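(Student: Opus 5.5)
Part (1) comes from rewriting the supremum over $\theta$ as a supremum over the unit circle of real pairs. For a real $\theta$ we have
\[
e^{i\theta}A=(\cos\theta+i\sin\theta)(\Re A+i\Im A),
\]
so $\Re(e^{i\theta}A)=\cos\theta\,\Re(A)-\sin\theta\,\Im(A)$. The map $\theta\mapsto(\cos\theta,-\sin\theta)$ is onto the set $\{(\alpha,\beta)\in\mathbb{R}^2:\alpha^2+\beta^2=1\}$. Applying $N$ and taking the supremum in the definition \eqref{eq_abo_amer} therefore gives exactly $\sup_{\alpha^2+\beta^2=1}N(\alpha\Re(A)+\beta\Im(A))$. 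Nothing about $N$ is used beyond its being a function on $\mathbb{B}(\mathcal H)$, so this is the same computation that turns \eqref{yam} into \eqref{kit}.

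For the upper bound in part (2), I write $\Re(e^{i\theta}A)=\frac12\big(e^{i\theta}A+e^{-i\theta}A^*\big)$. The triangle inequality and absolute homogeneity of $N$ over $\mathbb{C}$ then give
\[
N\big(\Re(e^{i\theta}A)\big)\le\tfrac12\big(N(A)+N(A^*)\big).
\]
Taking the supremum over $\theta$ gives the right-hand inequality.

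For the lower bound, I combine two rotations. Since $\Re(e^{i\theta}A)+i\,\Re\big(e^{i(\theta-\pi/2)}A\big)=e^{i\theta}A$, we get
\[
N(A)=N(e^{i\theta}A)\le N\big(\Re(e^{i\theta}A)\big)+N\big(\Re(e^{i(\theta-\pi/2)}A)\big)\le 2\,\omega_N(A).
\]
To justify the identity: with $B=e^{i\theta}A$ we have $e^{i(\theta-\pi/2)}A=-iB$, and $\Re(-iB)=\frac12(-iB+iB^*)=\Im B$, so $\Re B+i\Im B=B$ as required. The same argument applied to $A^*$, together with $\omega_N(A^*)=\omega_N(A)$ from the preceding theorem, gives $\frac12N(A^*)\le\omega_N(A)$.

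The only step that needs care is matching the rotation signs in this lower-bound identity. Everything else is a direct use of the norm axioms, so I do not expect a genuine obstacle.
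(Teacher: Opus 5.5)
Your proof is correct. The paper gives no proof of this theorem and only quotes it from Bottazzi and Conde \cite{bot}; your argument (uniqueness of the Cartesian decomposition of $e^{i\theta}A$ for (1), the triangle inequality with complex homogeneity of $N$ for the upper bound, and $N(A)\le N(\Re A)+N(\Im A)\le 2\omega_N(A)$ for the lower bound) is the standard one, and the bound on $N(A^*)$ also follows directly from $A^*=\Re A-i\Im A$ without appealing to $\omega_N(A^*)=\omega_N(A)$.
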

Combining the idea of \cite{O-F} and \cite{sheikh}, Zamani defined
$$\omega_{(N,t)}(A)=\sup_{\theta} N(\Re_t(e^{i\theta}A))\qquad (A\in\mathbb{B}(\mathcal H)),$$
for each $0\leq t\leq 1$. 
Similar to \eqref{kit}, we have
$$\omega_{(N,t)}(A)=\frac{1}{2}\sup_{\theta,\varphi\in\mathbb{R}}N(\Re_t((e^{i\theta}-e^{i\varphi})A)).$$

This notation defines a norm on $\mathbb{B}(\mathcal H)$ and
$$\max \{tN(A),(1-t)N(A^*)\} \leq\omega_{(N,t)}(A) \leq\max \{N(A), N(A^*)
\} .$$
In the following theorem, we state some properties of $\omega_{(N,t)}$.
\begin{theorem}
Let $N(.)$ be a norm and $0\leq t\leq1$. For each $A\in\mathbb{B}(\mathcal H)$,
\begin{enumerate}
\item $\omega_{(N,t)}(A^*)=\omega_{(N,1-t)}(A)$;
\item If $N(A)=N(A^*)$, then $\omega_{(N,t)}(A^*)=\omega_{(N,t)}(A)$;
\item $\omega_{(N,t)}(A)\leq\omega_{(N,s)}(A)$ if and only if $|t-\frac{1}{2}|\leq|s-\frac{1}{2}|$.
\end{enumerate}
\end{theorem}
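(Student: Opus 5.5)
The plan is to reduce all three items to two identities about the weighted real part:
\[
\Re_t(X)^*=tX^*+(1-t)X=\Re_{1-t}(X)
\]
for every $X\in\mathbb{B}(\mathcal H)$, and
\[
\Re_t(e^{i\theta}A^*)=te^{i\theta}A^*+(1-t)e^{-i\theta}A=\Re_{1-t}(e^{-i\theta}A).
\]
Both follow directly from \eqref{def1}.

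For (1), take the second identity, apply $N$, and take the supremum over $\theta$. Since $\theta\mapsto-\theta$ is a bijection of $\mathbb{R}$, this gives $\omega_{(N,t)}(A^*)=\omega_{(N,1-t)}(A)$.

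For (2), I read the hypothesis as $N(X^*)=N(X)$ for all $X$, i.e.\ a $*$-invariant norm. Applying the first identity to $X=e^{i\theta}A$ gives
\[
N(\Re_t(e^{i\theta}A))=N(\Re_{1-t}(e^{i\theta}A)).
\]
Taking suprema yields $\omega_{(N,t)}(A)=\omega_{(N,1-t)}(A)$, and combining with (1) gives $\omega_{(N,t)}(A)=\omega_{(N,t)}(A^*)$.

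For (3), I will show that $f(t)=\omega_{(N,t)}(A)$ is convex on $[0,1]$. For each fixed $\theta$, the map $t\mapsto\Re_t(e^{i\theta}A)$ is affine in $t$. Composing with the norm $N$ gives a convex function, and a pointwise supremum of convex functions is convex.

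Next I use the symmetry $f(t)=f(1-t)$, which holds under the $*$-invariance assumption of (2). Suppose $|t-\frac12|\le|s-\frac12|$. Then $t$ lies in the interval with endpoints $s$ and $1-s$, so $t=\lambda s+(1-\lambda)(1-s)$ for some $\lambda\in[0,1]$. Convexity then gives
\[
f(t)\le\lambda f(s)+(1-\lambda)f(1-s)=f(s).
\]
This is the "if" direction.

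The main obstacle is the "only if" direction, and the symmetry assumption. Without $*$-invariance, $f$ need not be symmetric about $\frac12$, so I would state (3) under the hypothesis of (2). Even then, the converse cannot hold for every $A$: for $A=0$, $f$ is constant. I would therefore interpret the converse as holding for generic $A$, or as failing only when $f$ is constant on the relevant interval. To get it, I would argue that a symmetric convex $f$ is nonincreasing on $[0,\frac12]$ and nondecreasing on $[\frac12,1]$. Hence $f(t)\le f(s)$ with $|t-\frac12|>|s-\frac12|$ forces $f$ to be constant between the two levels. I would point out that this degenerate case must be excluded for the claimed equivalence.
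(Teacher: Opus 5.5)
The paper gives no proof of this theorem; it only restates results of Zamani, so there is no argument of the paper to compare yours with. What you prove is correct. The two identities give (1), and they give (2) for a $*$-invariant $N$. Convexity of $f(t)=\omega_{(N,t)}(A)$ together with $f(t)=f(1-t)$ gives the ``if'' half of (3). You are also right that the ``only if'' half is false as printed: take $A=0$, or any normal $A$ with $N=\|\cdot\|$, where the paper itself notes $\omega^{\Re}_t(A)=\|A\|$ for all $t$.

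Your caveats can be sharpened in three places.

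\emph{The global reading of (2) is forced, not just convenient.} On $M_2$ take $N(X)=|x_{11}|+|x_{22}|+\max\{|x_{12}|,|x_{21}|,\tfrac12|x_{12}+2x_{21}|\}$ and $A=E_{12}$ (the matrix unit). Then $N(A)=N(A^*)=1$. However, $\omega_{(N,t)}(A)=\max\{t,1-t,1-\tfrac t2\}$ and $\omega_{(N,t)}(A^*)=\max\{t,1-t,\tfrac{1+t}{2}\}$, which equal $\tfrac78$ and $\tfrac34$ at $t=\tfrac14$.

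\emph{Without $*$-invariance, (3) fails even in the ``if'' direction.} It does not merely lose symmetry. The values $t=0$, $s=1$ satisfy $|t-\tfrac12|=|s-\tfrac12|$, so (3) would force $N(A^*)=\omega_{(N,0)}(A)=\omega_{(N,1)}(A)=N(A)$.

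\emph{Drop the phrase ``generic $A$''.} Take the $*$-invariant norm $N=\omega$ and write $z_x=\langle Ax,x\rangle$. Then $\omega_{(\omega,t)}(A)=\sup_{\|x\|=1}\sup_\theta|te^{i\theta}z_x+(1-t)e^{-i\theta}\overline{z_x}|=\omega(A)$ for every $t$. So the converse fails for every $A$, and even the version quantified over all $A$ is false for this $N$.

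Your other formulation (failure only when $f$ is flat) is the right one, and your monotonicity argument gives it exactly. If $|t-\tfrac12|>|s-\tfrac12|$, then $\omega_{(N,t)}(A)\le\omega_{(N,s)}(A)$ holds if and only if $\omega_{(N,t)}(A)=\omega_{(N,1/2)}(A)=\omega_N(A)$. This is because a convex, symmetric $f$ that is flat between two levels is flat all the way down to $\tfrac12$.

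The converse quantified over all $A$ depends on the norm. For the operator norm with $\dim\mathcal H\ge 2$, and for the Hilbert--Schmidt norm on matrices, a rank-one nilpotent $A$ gives $f(t)=\max\{t,1-t\}$ and $f(t)=\sqrt{t^2+(1-t)^2}$ respectively. Both are strictly increasing in $|t-\tfrac12|$, so the quantified converse holds for these norms.
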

In particular, he investigated the case of the Hilbert-Schmidt norm and obtained some bounds for $\omega_{(2,t)}$ where this symbol used for the case that $N$ is considered as Hilbert-Schmidt norm.

A similar concept was introduced in the framework of C$^*$-algebras, see \cite{c*}, and has been generalized in  several works. In particular, Gao and Hou \cite{gao} investigated  a special extension of numerical radius, see also \cite{mab}. Here, we restate them in an operator setting.

Let $s,t$ be two nonnegative real  numbers with $s+t>0$ and $A \in\mathbb{B}({\mathcal H})$. Define
$$\Re_{(s,t)}(A)=sA+tA^*,\qquad \Im_{(s,t)}(A)=\Re_{(s,t)}(-iA).$$
The $(s,t)$-weighted numerical radius is defined as
$$\omega_{(s,t)}(A)=\sup_\theta\|\Re_{(s,t)}(e^{i\theta}A)\|.$$
In the following result, we present some properties of $\omega_{(s,t)}$.
\begin{theorem}\label{4.4}
 Let $A\in\mathbb{B}(\mathcal H).$ Then
\begin{enumerate}
\item $\omega_{(s,t)}(A)=\omega_{(t,s)}(A^*);$
\item If $A$ is self-adjoint, then $(s+t)\omega(A)=\omega_{(s,t)}(A)=(s+t)\|A\|;$
\item $\omega_{(s,t)}$ is a norm on $\mathbb{B}(\mathcal H);$
\item $\max\{s,t\}\|A\|\leq\omega_{(s,t)}(A)\leq (s+t)\|A\|.$
\end{enumerate}
\end{theorem}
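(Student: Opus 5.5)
Each of the four items can be checked directly from the definition $\omega_{(s,t)}(A)=\sup_\theta\|sAe^{i\theta}+te^{-i\theta}A^*\|$, using only the fact that the operator norm is invariant under taking adjoints.

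For (1), take adjoints inside the norm. This gives $\|se^{i\theta}A+te^{-i\theta}A^*\|=\|se^{-i\theta}A^*+te^{i\theta}A\|$. The right-hand side equals $\|\Re_{(t,s)}(e^{i\varphi}A^*)\|$ with $\varphi=-\theta$. As $\theta$ runs over $\mathbb{R}$, so does $\varphi$, so the two suprema coincide.

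For (2), let $A=A^*$. Then $\Re_{(s,t)}(e^{i\theta}A)=(se^{i\theta}+te^{-i\theta})A$, whose norm is $|se^{i\theta}+te^{-i\theta}|\,\|A\|$. We have $|se^{i\theta}+te^{-i\theta}|\le s+t$, with equality at $\theta=0$, so $\omega_{(s,t)}(A)=(s+t)\|A\|$. For self-adjoint (indeed normal) $A$ we also have $\omega(A)=\|A\|$, which gives the stated chain.

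For (3), each map $A\mapsto\|\Re_{(s,t)}(e^{i\theta}A)\|$ is a seminorm, being the norm of a real-linear map, and a supremum of seminorms is subadditive. Homogeneity over $\mathbb{C}$ needs a separate check. Write $\lambda=|\lambda|e^{i\psi}$; then $\Re_{(s,t)}(e^{i\theta}\lambda A)=|\lambda|\Re_{(s,t)}(e^{i(\theta+\psi)}A)$, and shifting $\theta$ leaves the supremum unchanged. Definiteness follows from the lower bound in (4), since $\max\{s,t\}>0$ when $s+t>0$.

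For (4), the upper bound is the triangle inequality: $\|se^{i\theta}A+te^{-i\theta}A^*\|\le(s+t)\|A\|$. The lower bound is the main step, and I would prove it by averaging, which avoids any extremal-vector argument. Since $e^{2i\theta}$ runs over the circle,
\[
\frac1{2\pi}\int_0^{2\pi}e^{-i\theta}\Re_{(s,t)}(e^{i\theta}A)\,d\theta=\frac1{2\pi}\int_0^{2\pi}\big(sA+te^{-2i\theta}A^*\big)\,d\theta=sA.
\]
Taking norms of this Bochner integral gives $s\|A\|\le\omega_{(s,t)}(A)$. Multiplying instead by $e^{i\theta}$ before averaging isolates $tA^*$, so $t\|A\|\le\omega_{(s,t)}(A)$. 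Together these give $\max\{s,t\}\|A\|\le\omega_{(s,t)}(A)$.

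If one prefers to avoid vector-valued integrals, the averaging can be done weakly. Fix unit vectors $x,y$ and average the scalar $e^{-i\theta}\langle\Re_{(s,t)}(e^{i\theta}A)x,y\rangle$ over $\theta$. This yields $s|\langle Ax,y\rangle|\le\omega_{(s,t)}(A)$, and taking the supremum over $x,y$ gives the same bound.
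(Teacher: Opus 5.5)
Your proof is correct, but it takes a different route from the paper. The paper gives no direct argument. In Proposition~\ref{p} it notes that $\Re_{(s,t)}(e^{i\theta}A)=(s+t)\Re_{\tau}(e^{i\theta}A)$ with $\tau=\frac{s}{s+t}$, so $\omega_{(s,t)}=(s+t)\,\omega^{\Re}_{\tau}$. All four items are then read off from the cited properties of $\omega^{\Re}_{\tau}$:
\begin{itemize}
\item it is a norm;
\item $\omega^{\Re}_{\tau}(A)=\omega^{\Re}_{\tau}(A^*)=\omega^{\Re}_{1-\tau}(A)$;
\item $\max\{\tau,1-\tau\}\|A\|\le\omega^{\Re}_{\tau}(A)\le\|A\|$;
\item $\omega^{\Re}_{\tau}(A)=\|A\|$ for normal $A$.
\end{itemize}
The factor $s+t$ turns $\max\{\tau,1-\tau\}$ into $\max\{s,t\}$. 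That reduction is shorter and more informative. It shows $\omega_{(s,t)}$ is not a genuinely new quantity, and it gives Theorem~\ref{4.5} and the normal case at no extra cost. However, it relies on results proved elsewhere.

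Your argument is self-contained and works straight from the definition. For the lower bound you do not need a continuous average. The two points $\theta=0$ and $\theta=-\pi/2$ already cancel the $A^*$ term, since $\Re_{(s,t)}(A)\pm i\,\Re_{(s,t)}(-iA)$ equals $2sA$ and $2tA^*$ respectively. This is the rescaled form of the paper's identity $\Re_t(A)+i\Im_t(A)=2tA$, and it avoids vector-valued integrals altogether.

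In (1) the adjoint step is unnecessary, and the identification after it is off by an adjoint. The operator $se^{-i\theta}A^*+te^{i\theta}A$ is $\Re_{(s,t)}(e^{-i\theta}A^*)$. By contrast, $\Re_{(t,s)}(e^{-i\theta}A^*)=se^{i\theta}A+te^{-i\theta}A^*$ is the original operator. So $\Re_{(s,t)}(e^{i\theta}A)=\Re_{(t,s)}(e^{-i\theta}A^*)$ holds as an operator identity, and (1) is immediate. Your conclusion is unaffected, because the two operators are adjoints of each other and have the same norm. Your adjoint computation also proves $\omega_{(s,t)}(A)=\omega_{(s,t)}(A^*)$. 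Combined with (1), this gives the symmetry $\omega_{(s,t)}(A)=\omega_{(t,s)}(A)$ from the paper's closing corollary.
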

As a special case, let $\lambda\in\mathbb{R}$ and $t=\lambda-s$.
\begin{theorem}\label{4.5}
Let $A\in\mathbb{B}(\mathcal H).$ Then
\begin{enumerate}
\item $\omega_{(\frac{\lambda}{2},\frac{\lambda}{2})}(T)\leq\omega_{(s,\lambda-s)}(T)\leq \omega_{(\lambda,0)}(T)$;
\item $\omega_{(s,\lambda-s)}(T)\leq \omega_{(t,\lambda-t)}(T)$ if and only if $|s-\frac{\lambda}{2}|\leq|t-\frac{\lambda}{2}|;$
\item $\lambda\omega(T)\leq \omega_{(s,\lambda-s)}(T)\leq\lambda\|T\|.$
\end{enumerate}
\end{theorem}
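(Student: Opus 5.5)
The plan is to reduce Theorem~\ref{4.5} to the already established properties of the $t$-weighted numerical radius $\omega^{\Re}_t$ from Section~2, by rescaling. Fix $\lambda>0$; the case $\lambda=0$ forces $s=t=0$ and is excluded by $s+t>0$. Take $0\le s\le\lambda$, so that both weights are nonnegative. For every $\theta$ we have
$$\Re_{(s,\lambda-s)}(e^{i\theta}T)=s e^{i\theta}T+(\lambda-s)e^{-i\theta}T^*=\lambda\Big(\tfrac{s}{\lambda}e^{i\theta}T+\big(1-\tfrac{s}{\lambda}\big)(e^{i\theta}T)^*\Big)=\lambda\,\Re_{s/\lambda}(e^{i\theta}T).$$
Taking norms and the supremum over $\theta$ gives the key identity
$$\omega_{(s,\lambda-s)}(T)=\lambda\,\omega^{\Re}_{s/\lambda}(T).$$
This is precisely the statement that the $(s,t)$-weighted radius is a rescaled $t$-weighted radius.

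With this identity, each item becomes a scaled version of a known fact. For (2), part (5) of the theorem on $\omega^{\Re}_t$ says that $\omega^{\Re}_{s/\lambda}(T)\le\omega^{\Re}_{t/\lambda}(T)$ if and only if $|s/\lambda-\tfrac12|\le|t/\lambda-\tfrac12|$. Multiplying through by $\lambda>0$, this is equivalent to $|s-\tfrac{\lambda}{2}|\le|t-\tfrac{\lambda}{2}|$.

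For (1), apply (2) with the extreme choices. First take $\lambda/2$ in place of $s$: this gives the smallest possible value of $|s-\lambda/2|$, namely $0$, and yields the left inequality. Then take $t=\lambda$: this gives the largest possible value, $\lambda/2$, and yields the right inequality.

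For (3), compute the endpoints directly:
$$\omega_{(\lambda/2,\lambda/2)}(T)=\lambda\,\omega^{\Re}_{1/2}(T)=\lambda\,\omega(T),\qquad \omega_{(\lambda,0)}(T)=\lambda\,\omega^{\Re}_1(T)=\lambda\|T\|.$$
Combined with (1), these give (3). Alternatively, (3) follows from Theorem~\ref{2.5} together with part (3) of the theorem on $\omega^{\Re}_t$, after scaling.

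The only point needing care is the range of parameters. The statement writes $\lambda\in\mathbb{R}$, but the definition of $\omega_{(s,t)}$ requires $s,\lambda-s\ge0$, which forces $\lambda>0$ and $s\in[0,\lambda]$. I would state this restriction explicitly, and impose it on $t$ as well, so that $s/\lambda$ and $t/\lambda$ lie in $[0,1]$ and the Section~2 results apply. No other obstacle is expected; the substantive content sits in property (5) of $\omega^{\Re}_t$, which we are allowed to assume.
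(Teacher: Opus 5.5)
Your argument is the paper's argument. The paper proves Proposition~\ref{p}, $\omega_{(s,t)}(A)=(s+t)\,\omega^{\Re}_{s/(s+t)}(A)$, and simply declares Theorem~\ref{4.5} an immediate consequence of it together with the Section~2 properties of $\omega^{\Re}_t$. Your identity $\omega_{(s,\lambda-s)}(T)=\lambda\,\omega^{\Re}_{s/\lambda}(T)$ is the case $t=\lambda-s$. Your handling of (1) and (3), and of the parameter range ($\lambda>0$, $s,t\in[0,\lambda]$), is fine.

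There is one caveat, which the paper shares. The ``only if'' half of (2) is false for a fixed operator, so citing property (5) of $\omega^{\Re}_t$ as an equivalence for that fixed $T$ is the one step that does not hold up. If $T$ is normal, property (4) gives $\omega_{(s,\lambda-s)}(T)=\lambda\|T\|$ for every $s\in[0,\lambda]$. Hence $\omega_{(s,\lambda-s)}(T)\le\omega_{(t,\lambda-t)}(T)$ holds for all $s,t$; for instance $T=I$, $\lambda=1$, $s=0$, $t=\tfrac12$ gives $1\le 1$, while $|0-\tfrac12|>|\tfrac12-\tfrac12|$.

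What is actually true is the following:
\begin{itemize}
\item The ``if'' direction holds. It follows from convexity of $t\mapsto\omega^{\Re}_t(T)$, which is a supremum of norms of affine functions of $t$, together with the symmetry $\omega^{\Re}_t=\omega^{\Re}_{1-t}$.
\item The converse holds only in the uniform sense ``for all $T$''. It is witnessed by $T=\begin{bmatrix}0&1\\0&0\end{bmatrix}$, for which $\omega_{(s,\lambda-s)}(T)=\max\{s,\lambda-s\}=\tfrac{\lambda}{2}+|s-\tfrac{\lambda}{2}|$.
\end{itemize}
Since (1) and (3) use only the ``if'' direction, they are unaffected. Item (2) should be stated as an implication, or with the quantifier over $T$ made explicit.
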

\begin{theorem}
Let $A\in\mathbb{B}(\mathcal H).$ Then
$$\omega_{(s,t)}(A)=\frac{1}{2}\sup_{\theta,\varphi}\|\Re_{(s,t)}((e^{i\theta}-ie^{i\varphi})A)\|$$
and
\begin{align*}\omega_{(s,t)}(A)&=\sup_{\alpha^2+\beta^2=1}\|\alpha\Re_{(s,t)}(A)+\beta\Im_{(s,t)}(A)\|\\
&=\sup_{\alpha^2+\beta^2=1}\|(s+t) (\alpha\Re(A)+\beta \Im(A))+i(s-t)(\alpha\Im(A)-\beta\Re(A))\|.\end{align*}
\end{theorem}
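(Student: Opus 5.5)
The plan is to derive both identities from two elementary facts: $\Re_{(s,t)}$ is real-linear, and every complex number is a real multiple of a unimodular one.

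\emph{Real-linearity and the first identity.} For $\rho\in\mathbb{R}$ we have $\Re_{(s,t)}(\rho B)=s\rho B+t\rho B^*=\rho\,\Re_{(s,t)}(B)$. More generally $\Re_{(s,t)}(B+C)=\Re_{(s,t)}(B)+\Re_{(s,t)}(C)$, so $\Re_{(s,t)}$ is $\mathbb{R}$-linear. Now set $z=e^{i\theta}-ie^{i\varphi}$. As $\theta,\varphi$ range over $\mathbb{R}$, $z$ ranges over the whole closed disc $\{|z|\le 2\}$, since the sum of two unimodular numbers can be any complex number of modulus at most $2$. Write $z=\rho e^{i\psi}$ with $0\le\rho\le2$. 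Then
$$\|\Re_{(s,t)}(zA)\|=\rho\,\|\Re_{(s,t)}(e^{i\psi}A)\|\le 2\,\omega_{(s,t)}(A).$$
Equality is approached by choosing $\rho=2$, which is attained when $e^{i\theta}=-ie^{i\varphi}$, together with $\psi$ near a maximizing angle. Taking suprema gives $\sup_{\theta,\varphi}\|\Re_{(s,t)}((e^{i\theta}-ie^{i\varphi})A)\|=2\,\omega_{(s,t)}(A)$, which is the first identity.

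\emph{The $(\alpha,\beta)$-representation.} Write $e^{i\theta}=\cos\theta+i\sin\theta$. By $\mathbb{R}$-linearity,
$$\Re_{(s,t)}(e^{i\theta}A)=\cos\theta\,\Re_{(s,t)}(A)+\sin\theta\,\Re_{(s,t)}(iA).$$
By definition $\Im_{(s,t)}(A)=\Re_{(s,t)}(-iA)=-\Re_{(s,t)}(iA)$. Hence, with $\alpha=\cos\theta$ and $\beta=-\sin\theta$, we get $\Re_{(s,t)}(e^{i\theta}A)=\alpha\Re_{(s,t)}(A)+\beta\Im_{(s,t)}(A)$. As $\theta$ runs over $\mathbb{R}$, the pair $(\alpha,\beta)$ runs over the whole unit circle. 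Taking the supremum therefore yields the first equality of the second display.

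\emph{The final expression.} Substitute $A=\Re A+i\Im A$ and $A^*=\Re A-i\Im A$. This gives
$$\Re_{(s,t)}(A)=(s+t)\Re A+i(s-t)\Im A,$$
$$\Im_{(s,t)}(A)=-isA+itA^*=(s+t)\Im A-i(s-t)\Re A.$$
Forming $\alpha\Re_{(s,t)}(A)+\beta\Im_{(s,t)}(A)$ and regrouping gives $(s+t)(\alpha\Re A+\beta\Im A)+i(s-t)(\alpha\Im A-\beta\Re A)$, as claimed.

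The only step needing care is the first identity. There one must check that the set of values of $e^{i\theta}-ie^{i\varphi}$ is exactly the disc of radius $2$, and that the modulus $2$ is attained. Everything else is bookkeeping.
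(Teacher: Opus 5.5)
Your proof is correct. The bound $|e^{i\theta}-ie^{i\varphi}|\le 2$, the real-linearity of $\Re_{(s,t)}$, and the choice $\varphi=\theta+\pi/2$ (which gives $e^{i\theta}-ie^{i\varphi}=2e^{i\theta}$) together settle the first identity. Your expansions $\Re_{(s,t)}(A)=(s+t)\Re A+i(s-t)\Im A$ and $\Im_{(s,t)}(A)=(s+t)\Im A-i(s-t)\Re A$ are right. You only use $|z|\le 2$ and the attainment of $z=2e^{i\theta}$ for every $\theta$, so the claim that $z$ fills the whole disc is true but not needed.

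The paper gives no proof of this statement; it restates it from Gao and Hou. The route it suggests is implicit, through the rescaling in Proposition~\ref{p}. With $\tau=s/(s+t)$, one has $\Re_{(s,t)}(A)=(s+t)\Re_{\tau}(A)$, and likewise $\Im_{(s,t)}(A)=(s+t)\Im_{\tau}(A)$ because $\Im_\tau(A)=\Re_\tau(-iA)$. The three formulas then follow from two facts stated earlier in the paper:
\begin{itemize}
\item the $(\alpha,\beta)$-representation of $\omega^{\Re}_\tau$ from Section~2;
\item Zamani's formula $\omega_{(N,\tau)}(A)=\frac12\sup_{\theta,\varphi}N(\Re_\tau((e^{i\theta}-e^{i\varphi})A))$ with $N$ the operator norm. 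Replacing $e^{i\varphi}$ by $ie^{i\varphi}$ is only a reparametrization of $\varphi$.
\end{itemize}
That route is shorter, but it relies on identities the paper itself only cites. Your direct real-linearity argument is self-contained. Specialized to $s+t=1$, it also proves those two $\omega^{\Re}_t$ identities.
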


\section{Relationships between generalized numerical radii }
First of all, we summarize some of the main properties of these generalizations in the following table.\\
\begin{center}
\begin{tabular}{|l|c|c|c|c|}
\hline
radius     & norm & unitarily invariant & coincides with $\omega$ when & value for hermitian \\\hline
$\omega^{\Re}_t$        & yes  & yes                 & $t=\frac{1}{2}$           & $\|.\|$                       \\\hline
$\omega_t$       & no   & no       & $t=\frac{1}{2}$           & $2(1-t)\|.\|$                 \\\hline
$\omega_{(s,t)}$ & yes  & yes    & $s=t=\frac{1}{2}$         & $(s+t)\|.\|$            \\\hline    
\end{tabular}\end{center}\vspace{3mm}
Applying Theorem \ref{2.5} and Theorem \ref{3.3}, it  easily follows that 
$$ \omega_t(A)\leq2R \omega^{\Re}_t(A),$$
where $R=\max\{t,1-t\}$.

The following theorem presents another comparison between these generalized numerical radii.
 \begin{theorem}
Let $A\in \mathcal B(H)$.
\begin{itemize}
\item If $0\leq t\leq\frac{1}{2}$, then 
$$\omega_t(A)\leq 2(1-t)\omega^{\Re}_{\frac{1}{2(1-t)}}(A).$$
\item If $\frac{1}{2}\leq t\leq1$, then 
$$\omega_t(A)\leq 2t\omega^{\Re}_{\frac{1}{2t}}(A).$$
\end{itemize}
\end{theorem}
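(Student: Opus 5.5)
The plan is to go through the operator norm: first bound $\omega_t(A)=\omega(A_t)$ by $\|A_t\|$ using \eqref{eq1}, and then show that $A_t=A+(1-2t)A^*$ is a positive multiple of $\Re_s(e^{i\theta}A)$ for a suitable $s\in[0,1]$ and a suitable angle $\theta$. Once this is done, the definition $\omega^{\Re}_s(A)=\sup_\theta\|\Re_s(e^{i\theta}A)\|$ gives the bound immediately, by evaluating the supremum at that single $\theta$. So the work reduces to matching coefficients in each range of $t$.

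\emph{Case $0\le t\le \frac12$.} Here $1-2t\ge 0$. Put $s=\frac{1}{2(1-t)}$, which lies in $[\frac12,1]$. Then
$$1-s=\frac{1-2t}{2(1-t)},$$
so
$$A+(1-2t)A^*=2(1-t)\bigl(sA+(1-s)A^*\bigr)=2(1-t)\Re_s(A).$$
Taking $\theta=0$ in the definition of $\omega^{\Re}_s$, we get
$$\omega_t(A)\le\|A_t\|=2(1-t)\|\Re_s(A)\|\le 2(1-t)\,\omega^{\Re}_s(A).$$

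\emph{Case $\frac12\le t\le 1$.} Now $1-2t\le 0$, so a positive combination of $A$ and $A^*$ will not work directly. Instead we use the rotation $\theta=\pi/2$. Put $s=\frac1{2t}\in[\frac12,1]$, so that
$$1-s=\frac{2t-1}{2t}.$$
Since $(iA)^*=-iA^*$, we have
$$\Re_s(iA)=i\bigl(sA-(1-s)A^*\bigr)=\frac{i}{2t}\bigl(A+(1-2t)A^*\bigr)=\frac{i}{2t}A_t.$$
Hence
$$\omega_t(A)\le\|A_t\|=2t\|\Re_s(iA)\|\le 2t\,\omega^{\Re}_{1/(2t)}(A).$$

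The only delicate point is the second case. There the naive rescaling fails because of the sign of $1-2t$, and the fix is to absorb that sign into the phase $e^{i\pi/2}$ allowed by the supremum in $\omega^{\Re}_s$. The rest is routine coefficient checking, together with confirming that $s\in[0,1]$ in both cases, so that $\omega^{\Re}_s$ is defined.
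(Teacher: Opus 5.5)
Your proof is correct and follows the paper's route. Both bound $\omega(A_t)$ by $\|A_t\|$, rescale $A_t$ to $2(1-t)\Re_{1/(2(1-t))}(A)$ in the first case, and in the second case absorb the sign of $1-2t$ by a quarter-turn rotation. The paper uses $e^{-i\pi/2}$ where you use $e^{i\pi/2}$; this changes $\Re_{1/(2t)}(\cdot)$ only by a factor of $-1$ and leaves the norm unchanged.
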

\begin{proof} 
If $0\leq t\leq\frac{1}{2}$, then $0\leq1-2t\leq1$. Thus
\begin{align*}
\omega_t(A)&=\omega((1-2t)A^*+A)\\
&\leq\|(1-2t)A^*+A\|\\
&=2(1-t)\|\frac{1-2t}{2-2t}A^*+\frac{1}{2-2t}A\|\\
&\leq2(1-t)\|\Re_{\frac{1}{2-2t}}(A)\|\\
&\leq 2(1-t)\omega^{\Re}_{\frac{1}{2-2t}}(A).
\end{align*}

If $\frac{1}{2}\leq t\leq1$, then $0\leq2t-1\leq1$. Thus
\begin{align*}
\omega_t(A)&=\omega((1-2t)A^*+A)\\
&=\omega((2t-1)e^{i\pi}A^*+A)\\
&\leq\|(2t-1)e^{i\pi}A^*+A\|\\
&=2t\|\frac{2t-1}{2t}e^{i\pi}A^*+\frac{1}{2t}A\|\\
&\leq2t\|\Re_{\frac{1}{2t}}(e^{-i\pi/2}A)\|\\
&\leq 2t\omega^{\Re}_{\frac{1}{2t}}(A).
\end{align*}
\end{proof}

\begin{theorem}
Let $A\in\mathcal B(H)$ and $ 0 \leq t \leq 1. $ Then 
$$\sqrt{2}  \omega^{\Re}_{t}  (A) \leq \left( \Vert A \Vert_{t}+\Vert A \Vert_{1-t}\right) \leq 2 \left( \omega^{ }_{t}  (A) 
+ \omega^{ }_{1-t}  (A)  \right).$$

\end{theorem}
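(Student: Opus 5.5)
The plan is to prove the two inequalities separately. Both follow from elementary norm estimates together with the identities
$$A_t=A+(1-2t)A^*,\qquad A_{1-t}=A+(2t-1)A^*,$$
which come from the definition $A_t=(1-2t)A^*+A$ in Section 3.

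\emph{Right-hand inequality.} Apply the classical bound \eqref{eq1}, $\|B\|\le 2\omega(B)$, to $B=A_t$ and to $B=A_{1-t}$. By definition $\|A\|_t=\|A_t\|$ and $\omega_t(A)=\omega(A_t)$, so
$$\|A\|_t\le 2\omega_t(A)\quad\text{and}\quad \|A\|_{1-t}\le 2\omega_{1-t}(A).$$
Adding these two bounds gives the right-hand side directly.

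\emph{Left-hand inequality.} The key observation is that the $A^*$ terms cancel in the sum: $A_t+A_{1-t}=2A$. The triangle inequality then gives
$$2\|A\|=\|A_t+A_{1-t}\|\le \|A\|_t+\|A\|_{1-t}.$$
Next, bound $\omega^{\Re}_t(A)$ by $\|A\|$. This is item (3) of the theorem from \cite{sheikh} quoted in Section 2. It can also be seen directly: for every $\theta$,
$$\|\Re_t(e^{i\theta}A)\|=\|te^{i\theta}A+(1-t)e^{-i\theta}A^*\|\le t\|A\|+(1-t)\|A^*\|=\|A\|,$$
and taking the supremum over $\theta$ gives $\omega^{\Re}_t(A)\le\|A\|$. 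Combining the two displays,
$$\sqrt2\,\omega^{\Re}_t(A)\le 2\,\omega^{\Re}_t(A)\le 2\|A\|\le \|A\|_t+\|A\|_{1-t}.$$
This proves the left-hand inequality, in fact with the better constant $2$ in place of $\sqrt2$.

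No step here is a real obstacle. The one point that needs care is not to try writing $\Re_t(e^{i\theta}A)$ as a combination of $A_t$ and $A_{1-t}$. Doing so would require recovering $A^*$ from $A_t-A_{1-t}=2(1-2t)A^*$, which fails at $t=\tfrac12$. Routing the argument through $\|A\|$ avoids this degeneracy, so the proof is uniform in $t\in[0,1]$, including the endpoints $t=0,1$ and the case $t=\tfrac12$.
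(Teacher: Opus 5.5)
Your proof is correct. For the left-hand inequality it takes a different, and better, route than the paper. The paper uses $\omega^{\Re}_t(A)=\sup_{\alpha^2+\beta^2=1}\|\alpha\Re_t(A)+\beta\Im_t(A)\|$ and decomposes
\[
2\bigl(\alpha\Re_t(A)+\beta\Im_t(A)\bigr)=\alpha A_t+\alpha (A_{1-t})^*-i\beta A_{1-t}+i\beta (A_t)^* .
\]
It then applies the triangle inequality, $\|(A_s)^*\|=\|A_s\|$, and $|\alpha|+|\beta|\le\sqrt2$.

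This is essentially the decomposition you warn against, since $\alpha\Re_t(A)+\beta\Im_t(A)=\Re_t(e^{i\theta}A)$ for a suitable $\theta$. The paper, however, also allows the adjoints $(A_t)^*$ and $(A_{1-t})^*$, which have the same norms. That removes the $t=\tfrac12$ degeneracy, so that route also works uniformly in $t$. Its real cost is the factor $\sqrt2$ lost at the step $|\alpha|+|\beta|\le\sqrt2$.

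Your argument goes through $A_t+A_{1-t}=2A$ and $\omega^{\Re}_t(A)\le\|A\|$. It is shorter and gives $2\,\omega^{\Re}_t(A)\le\|A\|_t+\|A\|_{1-t}$. The constant $2$ is sharp: for self-adjoint $A$ one has $A_t=2(1-t)A$, $A_{1-t}=2tA$ and $\omega^{\Re}_t(A)=\|A\|$, so both sides equal $2\|A\|$.

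The paper's route buys nothing extra here. Its pointwise bound is $\|\alpha\Re_t(A)+\beta\Im_t(A)\|\le\frac{|\alpha|+|\beta|}{2}\bigl(\|A\|_t+\|A\|_{1-t}\bigr)$. Since $|\alpha|+|\beta|\ge 1$ when $\alpha^2+\beta^2=1$, this is never better than your uniform bound $\frac12\bigl(\|A\|_t+\|A\|_{1-t}\bigr)$.

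The right-hand inequality is proved the same way in both, by applying $\|B\|\le 2\omega(B)$ to $B=A_t$ and $B=A_{1-t}$.
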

\begin{proof}
Let $ \alpha, \beta \in \mathbb{R} $ be such that  $ \alpha^{2}+\beta^{2}=1. $ Then we have

\begin{align*}
&2 \Vert  \alpha \Re(A) + \beta \Im (A)+i(2t-1)(\alpha \Im(A) - \beta \Re (A)) \Vert\\
& = \left\Vert  \alpha  \left( T+(1-2t )A^{*}   \right) + \alpha \left( A^{*}+(2t -1 )A   \right)  -\beta i \left( T+(2 t -1 ) A^{*} \right)  -\beta i \left(  -A^{*}+(2t -1)A  \right) \right\Vert\\
& \leq  \vert \alpha\vert \Vert  A+(1-2t )A^{*}   \Vert +\vert \alpha\vert  \Vert A^{*}+(2t -1 )A   \Vert
+ \vert \beta \vert \Vert A+(2 t -1 ) A^{*}  \Vert + \vert \beta \vert  \Vert  A^{*}+(1 -2t )A \Vert\\
& = \vert \alpha\vert  \Vert A \Vert_{t}+\vert \alpha\vert \Vert A^{*} \Vert_{1-t} + \vert \beta \vert \Vert A \Vert_{1-t}+ \vert \beta \vert  \Vert A^{*} \Vert_{t} \\
&= \left(  \vert \alpha\vert +  \vert \beta \vert  \right) \left( \Vert A \Vert_{t} +  \Vert A \Vert_{1-t} \right)\\
& \leq \sqrt{2} \left( \Vert A \Vert_{t} +  \Vert A \Vert_{1-t} \right)\\
& \leq 2\sqrt{2}\left(   \omega^{ }_{t}  (A) + \omega^{ }_{1-t}  (A) \right).
\end{align*}
This proves the desired inequality.
\end{proof}
\begin{theorem}
Let $A\in\mathcal B(H)$ and $ 0 \leq t \leq 1. $ Then 
$$ \sup_{\theta}\omega^{ }_{t}  (e^{i\theta}A) \leq \Vert A \Vert_{t}  \leq 2 \left(  \omega^{\Re}_{t}  (A) 
+  t \omega(A)\right).$$
In particular, 
$$ \omega^{ }_{t}  (A) \leq \Vert A \Vert_{t}  \leq 2 \left( \omega^{\Re}_{t}  (A) 
+  t \omega(A)\right).$$
\end{theorem}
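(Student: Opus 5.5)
The plan is to bound $\Vert A\Vert_t=\Vert A_t\Vert$ from above through the defining decomposition $A_t=\Re_t(A)+i\Im_{1-t}(A)$. From below, the natural chain is $\omega_t(\cdot)=\omega((\cdot)_t)\le\Vert(\cdot)_t\Vert$, combined with a rotation argument.

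\textbf{Upper bound.} I will use
$$A_t=\Re_t(A)+i\Im_{1-t}(A)$$
and the triangle inequality to get $\Vert A_t\Vert\le\Vert\Re_t(A)\Vert+\Vert\Im_{1-t}(A)\Vert$. The first term is at most $\omega^{\Re}_t(A)$, by taking $\theta=0$ in the definition of $\omega^{\Re}_t$. For the second term, a direct computation gives
$$\Re_{1-t}(-iA)=(1-t)(-iA)+t(iA^*)=\frac{(1-t)A-tA^*}{i}=\Im_{1-t}(A).$$
Hence $\Vert\Im_{1-t}(A)\Vert=\Vert\Re_{1-t}(e^{-i\pi/2}A)\Vert\le\omega^{\Re}_{1-t}(A)=\omega^{\Re}_t(A)$, using the symmetry $\omega^{\Re}_{1-t}=\omega^{\Re}_t$ from the theorem of \cite{sheikh}. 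This yields
$$\Vert A\Vert_t\le 2\,\omega^{\Re}_t(A)\le 2\left(\omega^{\Re}_t(A)+t\,\omega(A)\right),$$
which is the stated upper bound, and in fact a slightly stronger one.

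\textbf{Lower bound, "in particular" form.} Since $\omega(X)\le\Vert X\Vert$ for every operator by \eqref{eq1}, taking $X=A_t$ gives $\omega_t(A)=\omega(A_t)\le\Vert A_t\Vert=\Vert A\Vert_t$.

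\textbf{Lower bound, supremum form (main obstacle).} For each $\theta$ the same argument gives $\omega_t(e^{i\theta}A)\le\Vert(e^{i\theta}A)_t\Vert$, where
$$(e^{i\theta}A)_t=e^{i\theta}A+(1-2t)e^{-i\theta}A^*.$$
To conclude, one would need $\Vert(e^{i\theta}A)_t\Vert\le\Vert A_t\Vert$, i.e.\ a bound on $\Vert A_t\Vert$ that is insensitive to rotating $A$. This is exactly the difficulty: $(\lambda A)_t=\lambda A_t$ holds only for real $\lambda$, as noted after the definition of $A_t$. My first attempt would be to write $\omega(Y)=\sup_\varphi\Vert\Re(e^{i\varphi}Y)\|$ with $Y=(e^{i\theta}A)_t$ via \eqref{yam}, and try to compare each $\Re(e^{i\varphi}Y)$ with $A_t$.

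Before relying on this step I would test it on scalars, and the test shows it cannot hold in general. Take $A=iI$ and $t=0$. Then $A_0=2\Re A=0$, so $\Vert A\Vert_0=0$. On the other hand, $(e^{-i\pi/2}A)_0=2I$, so $\omega_0(e^{-i\pi/2}A)=2$. The supremum inequality therefore fails for $t\ne\frac12$ (at $t=\frac12$ it reduces to the trivial $\omega(A)\le\Vert A\Vert$).

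So the plan establishes the upper bound (even $\Vert A\Vert_t\le2\omega^{\Re}_t(A)$) and the "in particular" line. The supremum form of the left inequality cannot be proved without an extra hypothesis. A natural candidate is replacing $\Vert A\Vert_t$ by $\sup_\theta\Vert(e^{i\theta}A)_t\Vert$, for which the argument above goes through verbatim.
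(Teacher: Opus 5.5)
Your proposal is correct: the upper bound, the ``in particular'' chain and the counterexample all check out. For $A=iI$, $t=0$ you have $\Vert A\Vert_0=\Vert iI-iI\Vert=0$ but $\omega_0(e^{-i\pi/2}A)=\omega(2I)=2$. So the supremum form of the left inequality is false as stated.

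Your example only exhibits $t=0$. To back your claim for every $t\neq\frac12$, use two scalar cases. For $t<\frac12$, $A=iI$ gives $\Vert A\Vert_t=2t<2(1-t)=\omega_t(I)$. For $t>\frac12$, $A=I$ gives $\Vert I\Vert_t=2(1-t)<2t=\omega_t(iI)$.

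Your route also differs from the paper's. The paper writes
\[
(e^{i\theta}A)_t=\Re_t(e^{i\theta}A)+\Re_{1-t}(e^{i\theta}A)-2t\,e^{-i\theta}A^*
\]
and estimates $|\langle (e^{i\theta}A)_t x,x\rangle|$ term by term. That argument only controls the numerical radius of $(e^{i\theta}A)_t$. What it really proves is the outer inequality $\sup_\theta\omega_t(e^{i\theta}A)\le 2(\omega^{\Re}_t(A)+t\,\omega(A))$. It says nothing about $\Vert A\Vert_t$ in either direction; for the norm, the same splitting gives only $2\omega^{\Re}_t(A)+2t\Vert A\Vert$.

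Your splitting $A_t=\Re_t(A)+i\Im_{1-t}(A)$, with $\Im_{1-t}(A)=\Re_{1-t}(-iA)$, works at the level of norms. It also drops the extra term, giving $\Vert A\Vert_t\le 2\omega^{\Re}_t(A)$.

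Applied to $e^{i\theta}A$, it yields $\sup_\theta\omega_t(e^{i\theta}A)\le\sup_\theta\Vert(e^{i\theta}A)_t\Vert\le 2\omega^{\Re}_t(A)$. Here you should state explicitly that $\omega^{\Re}_t(e^{i\theta}A)=\omega^{\Re}_t(A)$, which is immediate from the supremum in the definition. This sharpens the paper's actual conclusion and proves your repaired statement.

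A one-line alternative for the norm bound is $\Vert A_t\Vert\le(1+|1-2t|)\Vert A\Vert=2R\Vert A\Vert\le 2\omega^{\Re}_t(A)$, using $R\Vert A\Vert\le\omega^{\Re}_t(A)$.
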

\begin{proof}
Let $ x \in H $ with   $\Vert x \Vert =1. $ Then\\
\begin{align*}
&\left\vert   \left\langle  \left( \Re_{t}( e^{i\theta} A )+i \Im_{t}( e^{i\theta} A)\right) x, x \right \rangle \right\vert\\
&=\left\vert   \left\langle  \left(  \left( t e^{i\theta} A + (1-t) e^{-i\theta} A^{*}\right)+\left( (1-t) e^{i\theta} A- t e^{-i\theta} A^{*}\right) \right) x, x \right \rangle \right\vert\\
&=\left\vert   \left\langle  \left(  \left( t e^{i \theta} A + (1-t) e^{-i \theta} A^{*}\right)+\left( (1-t) e^{i \theta} A- t e^{-i \theta} A^{*}\right) +t e^{-i \theta}A^{*}-t e^{-i \theta}A^{*} \right) x, x \right \rangle \right\vert\\
&=\left\vert   \left\langle  \left(  \left( t e^{i \theta} A + (1-t) e^{-i \theta} A^{*}\right)+\left( (1-t) e^{i \theta} A+ t e^{-i \theta} A^{*}\right) -2t e^{-i \theta}A^{*} \right) x, x \right \rangle \right\vert\\
&\leq \omega^{\Re}_{t}(A) + \omega^{\Re}_{1-t}  (A) + 2t \omega(A)\\
&=2\omega^{\Re}_{t}(A) +2t \omega(A)\\
\end{align*}
taking the supremum over all $ \theta \in \mathbb{R}, $ we get the desired result.
\end{proof}

Although  $\omega_{(s,t)}$ may appear to be a distinct generalization of  $\omega^{\Re}_t$,  the following proposition shows that the two quantities are equivalent up to a scaling factor.
\begin{proposition}\label{p}
Let $s,t$ be two nonnegative real numbers with $s+t>0$ and $A\in\mathbb{B}(\mathcal H)$. Then
$$\omega_{(s,t)}(A)=(s+t)\omega^{\Re}_{\frac{s}{s+t}}(A).$$
\end{proposition}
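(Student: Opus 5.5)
The identity is a direct rescaling at the level of the operators inside the supremum. Set $u=\frac{s}{s+t}\in[0,1]$, so that $1-u=\frac{t}{s+t}$; this is well defined because $s+t>0$, and both weights are nonnegative. For every $\theta\in\mathbb{R}$ we then have
\begin{align*}
\Re_{(s,t)}(e^{i\theta}A)&=s e^{i\theta}A+t e^{-i\theta}A^*\\
&=(s+t)\left(u\, e^{i\theta}A+(1-u)(e^{i\theta}A)^*\right)=(s+t)\Re_u(e^{i\theta}A),
\end{align*}
using the definition \eqref{def1} of $\Re_u$ together with $(e^{i\theta}A)^*=e^{-i\theta}A^*$.

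The second step takes norms. Since $s+t>0$ is a positive scalar, $\|\Re_{(s,t)}(e^{i\theta}A)\|=(s+t)\|\Re_u(e^{i\theta}A)\|$ for each $\theta$. A positive constant factor commutes with the supremum over $\theta$, so
$$\omega_{(s,t)}(A)=\sup_\theta\|\Re_{(s,t)}(e^{i\theta}A)\|=(s+t)\sup_\theta\|\Re_u(e^{i\theta}A)\|=(s+t)\,\omega^{\Re}_{u}(A),$$
which is the claimed identity.

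There is no genuine obstacle here. The only points to check are that $u$ lies in $[0,1]$, so that $\omega^{\Re}_u$ is defined, and that the degenerate cases $s=0$ or $t=0$ are included. These correspond to $u=0$ or $u=1$, and then $\omega^{\Re}_u(A)=\|A\|$; this is consistent with $\omega_{(s,0)}(A)=s\|A\|$, obtained directly from the definition.

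As a check, the formula reproduces the bounds already stated. Theorem~\ref{4.4}(4) follows from $\max\{u,1-u\}\|A\|\le\omega^{\Re}_u(A)\le\|A\|$, and Theorem~\ref{4.5} follows from the monotonicity of $\omega^{\Re}_u$ in $|u-\tfrac12|$.
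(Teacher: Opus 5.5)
Your proposal is correct and follows the paper's proof: both factor $\Re_{(s,t)}(e^{i\theta}A)=(s+t)\Re_{\frac{s}{s+t}}(e^{i\theta}A)$ and pull the positive scalar $s+t$ out of the norm and the supremum over $\theta$. The only difference is that you apply the identity to $e^{i\theta}A$ explicitly and check the cases $s=0$ or $t=0$, whereas the paper leaves both implicit.
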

\begin{proof}
By a simple calculations
\begin{align*}
\Re_{(s,t)}(A)&=sA+tA^*\\
&=(s+t)(\frac{s}{s+t}A+\frac{t}{s+t}A^*)\\&
=(s+t)\Re_{\frac{s}{s+t}}(A).
\end{align*}
So by definitions of $\omega_{(s,t)}$  and $\omega^{\Re}_t$, the result follows.
\end{proof}

Theorem \ref{4.4} and Theorem \ref{4.5} now follow immediately from Proposition \ref{p}.

Furthermore, we have the following corollary.
\begin{corollary}
Let $s,t$ be two nonnegative real numbers with $s+t>0$ and $A\in\mathbb{B}(\mathcal H)$. Then
\begin{itemize}
\item $\omega_{(s,t)}(A)=\omega_{(t,s)(A)}$.
\item If $A$ is normal, then $(s+t)\omega(A)=\omega_{(s,t)}(A)=(s+t)\|A\|$.
\item $(s+t)\omega(A)\leq \omega_{(s,t)}(A)\leq 2\max\{s,t\}\omega(A)$.
\end{itemize}
\end{corollary}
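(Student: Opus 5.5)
Each of the three items will follow from Proposition \ref{p}, which gives $\omega_{(s,t)}(A)=(s+t)\,\omega^{\Re}_{\frac{s}{s+t}}(A)$, combined with the properties of $\omega^{\Re}_t$ collected in Section 2. Throughout I write $\tau=\frac{s}{s+t}$. Then $1-\tau=\frac{t}{s+t}$, and
$$\max\{\tau,1-\tau\}=\frac{\max\{s,t\}}{s+t}.$$

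\textbf{Symmetry.} Proposition \ref{p} applied to the pair $(t,s)$ gives $\omega_{(t,s)}(A)=(s+t)\,\omega^{\Re}_{1-\tau}(A)$. The theorem of \cite{sheikh} listing the properties of $\omega^{\Re}_t$ states that $\omega^{\Re}_{\tau}(A)=\omega^{\Re}_{1-\tau}(A)$. Hence $\omega_{(s,t)}(A)=\omega_{(t,s)}(A)$. Equivalently, $\omega^{\Re}_{\tau}(A)=\omega^{\Re}_{\tau}(A^*)$ together with Theorem \ref{4.4}(1) gives the same conclusion.

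\textbf{Normal operators.} If $A$ is normal, then $\omega^{\Re}_{\tau}(A)=\|A\|$ by item (4) of the same theorem, and $\omega(A)=\|A\|$ as recalled in the introduction. Multiplying by $s+t$ gives the claimed chain of equalities.

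\textbf{Two-sided bound.} Theorem \ref{2.5} gives $\omega(A)\le\omega^{\Re}_{\tau}(A)\le 2\max\{\tau,1-\tau\}\,\omega(A)$. Multiplying by $s+t$ and using the identity for $\max\{\tau,1-\tau\}$ above yields
$$(s+t)\,\omega(A)\le\omega_{(s,t)}(A)\le 2\max\{s,t\}\,\omega(A).$$

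The only point requiring care is the rescaling: we need $s+t>0$ so that $\tau\in[0,1]$ is well defined and the results of Section 2 apply. Beyond that, every step is a direct substitution into earlier results.
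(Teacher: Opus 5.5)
Your proof is correct and takes essentially the paper's route: the corollary is stated right after Proposition \ref{p} without a separate proof. It is obtained exactly as you do, by rescaling with $\tau=\frac{s}{s+t}$ the identity $\omega^{\Re}_{\tau}(A)=\omega^{\Re}_{1-\tau}(A)$, the normal case $\omega^{\Re}_{\tau}(A)=\|A\|$, and Theorem \ref{2.5}, using $\max\{\tau,1-\tau\}=\frac{\max\{s,t\}}{s+t}$.
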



\begin{thebibliography}{5}
\bibitem{O-F} A. Abu-Omar and F. Kittaneh, \textit{A generalization of the numerical radius }, Linear Algebra Appl., 569 (2019), 323--334.
\bibitem{bot}
T. Bottazzi and C. Conde, \textit{Generalized numerical radius and related inequalities}, Oper. Matrices 15 (2021), no. 4, 1289-1308.
\bibitem{c*}
A. Bourhim and M. Mabrouk, \textit{ $a$-numerical range on C$^*$-algebras}, Positivity 25 (2021), 1489–1510.
\bibitem{sab}
C. Conde, H.R. Moradi and M. Sababheh, \textit{Some weighted numerical radius inequalities}, Indian Journal of Pure and Applied Mathematics, (2022).

 \bibitem{jen} S.S. Dragomir, \textit{Bounds for the normalized Jensen functional }, Bull. Austral. Math. Soc. 3(2006), 471--478.


 \bibitem{he-ha} A. El. Farissi,  \textit{Simple proof and refinement of Hermite-Hadamard inequality}, J. Math. Inequal., 4(3) (2010), 365--369.
 
 

 
 
 
\bibitem{gao}
F. Gao and M.Y. Hou, \textit{A generalization of the weighted algebraic numerical radius on C$^*$-algebras}, Oper. Matrices 19 (2025), no. 1, 31-48.

 %
\bibitem{q-num}
M.I.C. Goncalves and A.R. Sourour, \textit{Isometries of a generalized numerical radius,} Linear Alg. Appl. 429, Issue 7 (2008), 1478-1488.

\bibitem{gust} K. E. Gustafson and D. K. M. Rao,  \textit{Numerical Range}. In: Numerical Range. Universitext. Springer, New York NY. (1997). 


 \bibitem{kit} F. Kittaneh, M. S. Moslehian and T. Yamazaki, {\it Cartesian decomposition and numerical radius inequalities}, Linear Algebra Appl. 471 (2015), 46--53.


\bibitem{mab}
 M. Mabrouk, A. Zamani, \textit{An extension of the a -numerical radius on C$^*$-algebras}, Banach Journal
of Mathematical Analysis 17 (3), 42 (2023).

 %
\bibitem{Sattari} M. Sattari, M. S. Moslehian and T. Yamazaki, \textit{Some generalized numerical radius inequalities for Hilbert space operators}, Linear Algebra Appl, 470 (2014), 1--12.

\bibitem{sheikh}
A. Sheikhhosseini, M. Khosravi and M. Sababheh,\textit{ The weighted numerical radius}, Ann. Funct. Anal. 13, 3 (2022).
 
%
 \bibitem{sheyb} S. Sheybani, M. Sababheh and H. Moradi, {\it Weighted inequalities for the numerical radius}, Vietnam J. Math. 51 (2), (2023), 363-377.


%

%


\bibitem{zamani2}
A.Zamani, \textit{The weighted Hilbert–Schmidt numerical radius}, Linear Alg. Appl. 675 (2023), 225--243.

\bibitem{zamani} A. Zamani, M.S. Moslehian, Q. Xu and C. Fu, \textit{Numerical radius inequalities concerning with algebra norms}, Mediteranean Journal of Mathematics 18 (2019).

 \bibitem{zam_2021} A. Zamani and P. W\'{o}jcik, {\textit {Another generalization of the numerical radius for Hilbert space operators}}, Linear Algebra Appl. 609 (2021), 114-128.
\bibitem{yam} T. Yamazaki, \textit{On upper and lower bounds of the numerical radius and on equality condition}, Studia math. 178, (2007), 83--89.
\end{thebibliography}
\end{document}